\documentclass[11pt,reqno]{amsproc}
\linespread{1.1}
\allowdisplaybreaks
\numberwithin{equation}{section}

\usepackage[utf8]{inputenc}

\usepackage{cite}
\usepackage{color}
\usepackage{morefloats}
\usepackage{fullpage}
\usepackage{graphicx}
\usepackage{textcomp}
\usepackage{subfigure}
\usepackage{enumerate}
\usepackage[debug=false, colorlinks=true, pdfstartview=FitV,
linkcolor=blue, citecolor=blue, urlcolor=blue]{hyperref}
\usepackage[semicolon,square,authoryear]{natbib}

\usepackage[doipre={DOI:~}]{uri}

\newtheorem{theorem}{Theorem}[section]
\newtheorem{lemma}[theorem]{Lemma}
\newtheorem{proposition}[theorem]{Proposition}

%

\usepackage{morefloats}

\newlength{\drop}
\definecolor{amethyst}{rgb}{0.6, 0.4, 0.8}
\definecolor{burgundy}{rgb}{0.5, 0.0, 0.13}

\title{\textbf{On optimal designs using topology optimization for flow through porous media applications}}

\author{\textbf{T.~Phatak} and \textbf{K.~B.~Nakshatrala} \\
  {\small Department of Civil and Environmental
    Engineering, University of Houston, Texas. \\
    \textbf{Correspondence to:}~knakshatrala@uh.edu}}

\keywords{topology optimization; flow through porous media; pressure-dependence viscosity; Darcy-Forchheimer; mechanical dissipation; minimum power theorem}

\begin{document}

\begin{titlepage}
  \drop=0.1\textheight
  \centering
  \vspace*{\baselineskip}
  \rule{\textwidth}{1.6pt}\vspace*{-\baselineskip}\vspace*{2pt}
  \rule{\textwidth}{0.4pt}\\[\baselineskip]
       {\Large \textbf{\color{burgundy}
       On optimal designs using topology optimization for flow through porous media applications}}\\[0.3\baselineskip]
       \rule{\textwidth}{0.4pt}\vspace*{-\baselineskip}\vspace{3.2pt}
       \rule{\textwidth}{1.6pt}\\[\baselineskip]
       \scshape
       An e-print of the paper is available on arXiv. \par
       \vspace*{1\baselineskip}
       Authored by \\[\baselineskip]

  {\Large T.~Phatak\par}
  {\itshape Graduate student, University of Houston, Texas 77204.}\\[0.75\baselineskip]

  {\Large K.~B.~Nakshatrala\par}
  {\itshape Department of Civil \& Environmental Engineering \\
  University of Houston, Houston, Texas 77204. \\
  \textbf{phone:} +1-713-743-4418, \textbf{e-mail:} knakshatrala@uh.edu \\
  \textbf{website:} http://www.cive.uh.edu/faculty/nakshatrala}\\[2\baselineskip]
\begin{figure*}[h]
	\subfigure[Without enforcing radial symmetry]{\includegraphics[scale=0.36]{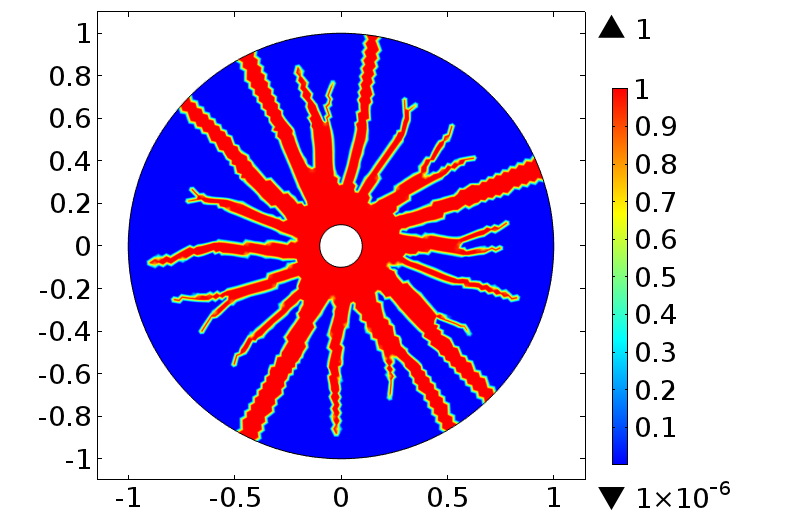}}
	\hspace{0.5cm}
	\subfigure[Invoking axisymmetry]{\includegraphics[scale=0.3]{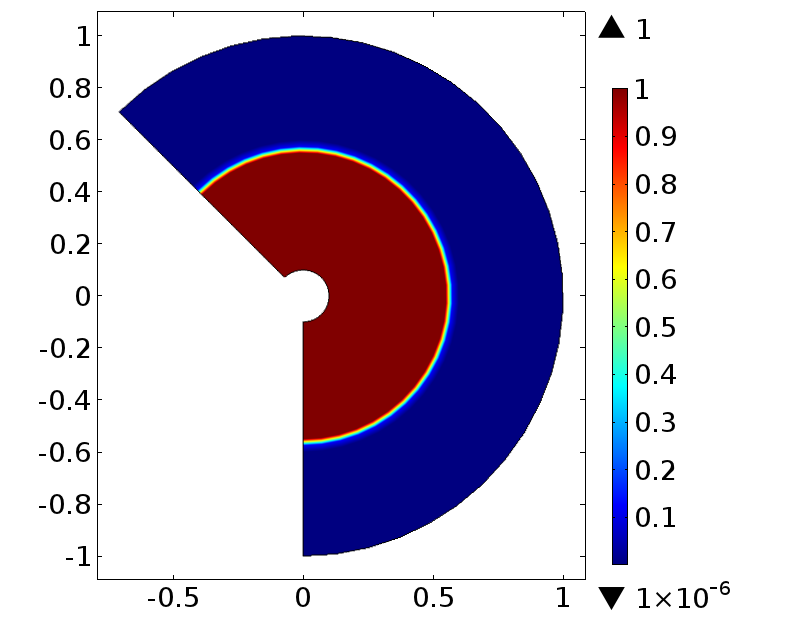}}

	\emph{We have considered a pressure-driven problem with axisymmetry and got optimal material layouts using topology optimization by maximizing the total rate of dissipation. The left figure shows unphysical finger-like design patterns when the primal analysis does not enforce explicitly the underlying radial symmetry. The right figure shows that one can avoid such numerical pathologies if the primal analysis invokes axisymmetry conditions.}
\end{figure*}

  \vfill
  {\scshape 2020} \\
  {\small Computational \& Applied Mechanics Laboratory} \par
\end{titlepage}

\begin{abstract}
Topology optimization (\textsf{TopOpt}) is a mathematical-driven design procedure to realize optimal material architectures. This procedure is often used to automate the design of devices involving flow through porous media, such as micro-fluidic devices. \textsf{TopOpt} offers material layouts that control the flow of fluids through porous materials, providing desired functionalities. Many prior studies in this application area have used Darcy equations for primal analysis and the minimum power theorem (\textsf{MPT}) to drive the optimization problem. But both these choices (Darcy equations and \textsf{MPT}) are restrictive and not valid for general working conditions of modern devices. Being simple and linear, Darcy equations are often used to model flow of fluids through porous media. However, two inherent assumptions of the Darcy model are: the viscosity of a fluid is a constant, and inertial effects are negligible. There is irrefutable experimental evidence that viscosity of a fluid, especially organic liquids, depends on the pressure. Given the typical small pore-sizes, inertial effects are dominant in micro-fluidic devices. Next, \textsf{MPT} is not a general principle and is not valid for (nonlinear) models that relax the assumptions of the Darcy model. This paper aims to overcome the mentioned deficiencies by presenting a general strategy for using  \textsf{TopOpt}. First, we will consider nonlinear models that take into account the pressure-dependent viscosity and inertial effects, and study the effect of these nonlinearities on the optimal material layouts under \textsf{TopOpt}. Second, we will explore the rate of mechanical dissipation, valid even for nonlinear models, as an alternative for the objective function. Third, we will present analytical solutions of optimal designs for canonical problems; these solutions not only possess research and pedagogical values but also facilitate verification of computer implementations.
\end{abstract}

\maketitle

\vspace{-0.25in}

\section*{ABBREVIATIONS}
1) \textsf{TopOpt}: topology optimization,
2) \textsf{MPT}:  minimum power theorem
\vspace{-0.1in}


\setcounter{figure}{0}   

\section{INTRODUCTION AND MOTIVATION}
Topology optimization---referred to as \textsf{TopOpt} hereon---is a contemporary leading design procedure for obtaining optimal material layouts. \textsf{TopOpt} being mathematical-driven, the whole design procedure is automatic with minimal or no human intervention \citep{bendsoe_sigmund_2013topology}. Another attractive feature is \textsf{TopOpt}'s ability to introduce new boundaries into the design space \citep{bendsoe_1995optimization, rozvany2014topology,nakshatrala2013nonlinear}. Said differently, \textsf{TopOpt} could provide designs with holes even if the user-provided initial configuration had none; for example. Although present for over four decades, this design procedure has had a renewed interest. The reason for this resurgence is that new manufacturing techniques (e.g., additive manufacturing) allow us to fabricate, previously unrealized, intricate material layouts often provided by \textsf{TopOpt} \citep{zegard2016bridging}.

Initially introduced for structural design, researchers have extended \textsf{TopOpt} to other fields such as electronics (e.g., MEMS) \citep{maute2003reliability,dede2015topology,jang2012topology}, automotive \citep{yang1995automotive}, biomedical \citep{fujii2002pdms,vilardell2019topology}, and aeronautical \citep{james2014concurrent, maute2004conceptual}  industries to get optimal designs for components as well as systems. These fields went beyond solid mechanics. \textsf{TopOpt} has also been applied to fluid mechanics \citep{gersborg2005topology,alexandersen2020review}, nano-photonics \citep{jensen2011topology}, acoustics \citep{duhring2008acoustic}, and flow and transport in porous media \citep{borrvall2003topology}. The last topic is of central interest to this paper. 

The emergence of ink-jet printers, which use tiny tubes carrying ink for printing, and additive manufacturing (3D printing) have triggered an interest in fluid flows through micropore networks. This interest has created fresh opportunities in the emerging research field of microfluidics \citep{whitesides2006origins}. As the name suggests, microfluidics concerns with the study of flows of tiny quantities (a few milliliters or smaller) of fluids in miniature-sized devices. In recent times, such devices have gained popularity in medical diagnostics, filters, and sensors. The success of these instruments across the application areas depends on the reliability of results and optimal performance \citep{fujii2002pdms}. Hence, the primary challenge is to get optimal designs for producing reliable results.

\citet{borrvall2003topology} were among the first to apply \textsf{TopOpt} for fluid problems. They used Stokes equations, which neglect nonlinear inertial effects, to get optimal fluid paths. Extending the mentioned work, \citet{gersborg2005topology} have included inertial effects but restricted their study to free flows. \citet{wiker2007topology} have applied \textsf{TopOpt} for coupled flows -- combining flow through porous media and free fluid flow. For the former class of flows, they used the Darcy model while the Stokes model for the latter; both these models neglect inertial effects. To drive the design, they used \emph{minimization of the total potential power} as the objective function. Another relevant work is by \citet{guest2006topology}, who proposed a mixed formulation combining the Stokes and Darcy equations with the objective, again, to minimize the total potential power. These prior works manifest two main drawbacks: (i) model the flow through porous media using Darcy equations, and (ii)  use the minimum power theorem (\textsf{MPT}) to drive the design problem. We elaborate these two points below.

Darcy equations are valid under a myriad of assumptions; see \citep{Rajagopal_2007,nakshatrala2011numerical}. Some of these assumptions are not valid to the working of typical microfluidic devices. Specifically, two fundamental assumptions of the Darcy model are: the viscosity of the fluid is constant, and the inertial effects are negligible. However, irrefutable experimental evidence shows that the viscosity of a fluid depends on the pressure, especially for organic liquids. \citet{Barus_AJS_1893_v45_p87} has shown that this dependence is exponential. Studies have shown that inertial forces can play an important role in determining the flow characteristics even with very low Reynolds numbers \citep{stone2004engineering}. A combination of various internal forces (like inertial and capillary forces) and external forces (e.g., pressure gradient) affect the microfluidic flow. As the size shrinks, surface forces become more significant than the gravity forces \citep{convery201930}. Importantly, these nonlinearities give rise to velocity and pressure profiles that are qualitatively and quantitatively different from that of Darcy equations \citep{nakshatrala2011numerical,chang2017modification}. Therefore, by using Darcy equations, the design process ignores these nonlinear effects, leading to non-optimal and unreliable products.

Returning to the second point, \textsf{MPT} is not a physical law but a mere restatement of Darcy equations, in the spirit of calculus of variations. A solution of Darcy equations satisfies \textsf{MPT}, and \emph{vice versa}. Importantly, nonlinear models for flow through porous media, like the ones considered in this paper, do not meet \textsf{MPT}. Thus, there is a search for an alternative candidate with a universal appeal for the objective function; we articulate in this paper that the rate of mechanical dissipation is an ideal candidate. 

Besides the above drawbacks, there are other aspects contributing to the knowledge gap. Even without nonlinearities, there is no general way to formulate a well-posed design problem. Should we maximize or minimize the rate of dissipation? Should we place a volume bound constraint on the material with high or low permeability? Do the boundary conditions---pressure-driven versus velocity-driven---affect the nature of the optimization problem? Last but not least, the lack of analytical solutions is impeding the verification of numerical solutions and the essential understanding of optimal designs required for translation to other scenarios.

The central aim of this paper is to fill these aspects of the knowledge gap. We will account for two nonlinearities---pressure-dependent viscosity, and nonlinear inertial effects---into the primal analysis. We will answer the following key question, pertaining to the formulation of well-posed design problems under \textsf{TopOpt} for flow through porous media applications: 
\begin{enumerate}[(Q1)]
\item What are the proper objective function and volume constraint to drive the topology optimization for problems involving flow through porous media? 
\begin{enumerate}[(i)]
\item Specifically, we will discuss the limited scope of \textsf{MPT}.
\item We show that extremization of the rate of dissipation, a physical quantity, can serve as a proper objective function. Whether it is maximization or minimization depends on the boundary conditions and the nature of the problem.
\item A volume constraint on the material with a higher permeability is compatible with a wide variety of problems. 
\end{enumerate}
\end{enumerate}

To facilitate verification of numerical simulators, we will present analytical solutions for 1D and axisymmetric problems. We will also present numerical solutions for various representative problems. Using these analytical and numerical solutions, we will gain an understanding of the optimal designs and answer these fundamental questions: 
\begin{enumerate}[(Q2)]
\item What are the ramifications of the two nonlinearities (pervasive in microfluidic applications)---pressure-dependent viscosity and inertial effects---individually on the optimal designs?
\item How do the volumetric constraint, and (pressure-driven \emph{versus} velocity-driven) boundary conditions affect the optimal material layout?
\end{enumerate}

A few words on the scope of this paper are warranted. First, this paper does not concern with solution procedures, such as new filters or solvers, which dominate the current literature. Instead, constructing well-posed design problems and understanding the resulting optimal designs are the main themes of this paper. The innovation is that our approach is very general, applicable even to nonlinear models. This general approach along with the body of knowledge presented in this paper will be valuable to researchers and practitioners alike. The design problem under the proposed \textsf{TopOpt} framework is programmed using \citet{COMSOL}, and all the numerical results reported in this paper are generated using this computer implementation.

Second, this paper addresses the flow of \emph{single-phase} fluids in \emph{rigid} porous media. However, numerous applications exist that involve the flow of multi-phase fluid systems \citep{zhao2019comprehensive}. Also, some emerging applications derive their functionalities from the porous solid's deformation; a few notable works include \citep{kim2013rigorous,dalbe2018morphodynamics}. These two phenomena---the flow of multi-phase fluids and deformable porous media---are beyond the scope of this paper. Applying topology optimization to such phenomena has not been explored adequately and can be an excellent topic for future works.

The outline for the rest of the paper is as follows. \S\ref{Sec:S2_Porous_GE} presents the equations governing fluid flow in porous media (i.e., Darcy equations, pressure-dependent viscosity, and inertial effects). In \S\ref{Sec:S3_Porous_MPT}, we will outline the minimum power theorem and discuss the appropriateness and limitations of using this theorem as an objective function to drive \textsf{TopOpt}. Using one-dimensional problems, we will then gain insights into how to construct well-posed design problems based on the rate of mechanical dissipation (\S\ref{Sec:S4_porous_1D_insights}), and construct analytical solutions for these design problems (\S\ref{Sec:S5_porous_1D_optimal_solutions}). \S\ref{Sec:S6_Porous_AxiSym_2D} and \S\ref{Sec:S7_Porous_AxiSym_3D} will consider axisymmetry problems in 2D and 3D and address two aspects: (i) how to get accurate designs respecting the underlying symmetry, and (ii) what is the nature of these designs. \S\ref{Sec:S8_Porous_Barus} discusses the ramifications of pressure-dependent viscosity and inertial effects on the optimal designs. The article will end with concluding remarks on using \textsf{TopOpt} for designing devices involving the flow of fluids through porous media (\S\ref{Sec:S9_Porous_CR}).

\section{GOVERNING EQUATIONS: MATHEMATICAL MODELS AND DESIGN PROBLEM}
\label{Sec:S2_Porous_GE}

\subsection{Mathematical models for flow through porous media}
We will first document Darcy equations and then present the nonlinear generalizations considered in this paper. We denote the domain by $\Omega$ and its boundary $\partial \Omega$. A spatial point is denoted by $\mathbf{x} \in \overline{\Omega}$, where an overline denotes the set closure (i.e., $\overline{\Omega} := \Omega \cup \partial \Omega$), the unit outward normal to the boundary by $\widehat{\mathbf{n}}(\mathbf{x})$, and the spatial gradient and divergence operators by $\mathrm{grad}[\cdot]$ and $\mathrm{div}[\cdot]$, respectively. $\mathbf{v}(\mathbf{x})$ and $p(\mathbf{x})$ denote the discharge (or Darcy) velocity and pressure\footnote{For the Darcy or Darcy-Forchheimer model, $p$ can be either absolute or relative pressure. But for the Barus model, $p$ should be interpreted as the relative pressure with an appropriate selection of the reference pressure. See equation \eqref{Eqn:Barus_Viscosity} and the discussion below it.}, respectively. The boundary is divided into two complementary parts: $\Gamma^{v}$ and $\Gamma^{p}$. $\Gamma^{v}$ is that part of the boundary on which the normal component of the velocity is prescribed, and $\Gamma^{p}$ is the part of the boundary on which the pressure is prescribed. 

The equations that govern the flow of fluids in porous media take the following form: 
\begin{subequations}
  \begin{alignat}{2}
  \label{Eqn:TopOpt_GE_Darcy}
    &\alpha \mathbf{v}(\mathbf{x}) + \mathrm{grad}[p] = \rho \mathbf{b}(\mathbf{x}) && \quad \mathrm{in} \; \Omega \\
    \label{Eqn:TopOpt_GE_Continuity}
    &\mathrm{div}[\mathbf{v}] = 0 &&\quad \mathrm{in} \; \Omega \\
    \label{Eqn:TopOpt_GE_vBC}
    &\mathbf{v}(\mathbf{x}) \cdot \widehat{\mathbf{n}}(\mathbf{x}) = v_{n}(\mathbf{x}) &&\quad \mathrm{in} \; \Gamma^{v} \\
    \label{Eqn:TopOpt_GE_pBC}
    &p(\mathbf{x}) = p_0(\mathbf{x}) &&\quad \mathrm{on} \; \Gamma^{p}
  \end{alignat}
\end{subequations}
where $\alpha$ is the drag coefficient, $\rho$ the density of the fluid, $p_0(\mathbf{x})$ the prescribed pressure, $v_n(\mathbf{x})$ the prescribed normal component of the velocity, and $\mathbf{b}(\mathbf{x})$ the given specific body force. The drag coefficient is defined as:
\begin{align}
  \alpha = \frac{\mu}{k(\mathbf{x})}
\end{align}
where $\mu$ is the coefficient of viscosity of the fluid, and $k$ is the permeability of the porous medium. The Darcy model assumes constant coefficient of viscosity and the permeability to be independent of the field variables (i.e., velocity and pressure), while the latter can vary spatially. 

\subsubsection{Pressure-dependence viscosity} Based on extensive experiments, \citet{Barus_AJS_1893_v45_p87} has shown that the coefficient of viscosity for many organic liquids depends exponentially on the pressure. Mathematically,
\begin{align}
\label{Eqn:Barus_Viscosity}
  \mu(p) = \mu_0 \exp[\beta_{\mathrm{B}} p]
\end{align}
where $\beta_\mathrm{B} \geq 0$ is a characteristic of the fluid, which can be determined experimentally, and $\mu_0$, a constant, is the viscosity of the fluid at a reference pressure. Often the reference pressure is taken to be the atmospheric pressure. A non-zero value for the reference pressure implies that $p$ should be interpreted as the relative pressure (i.e., difference between the absolute pressure and the reference pressure). Given equation \eqref{Eqn:Barus_Viscosity}, the drag coefficient takes the following mathematical form: 
\begin{align}
	\label{Eqn:Barus_Drag}
  \alpha = \alpha_{\mathrm{B}}(p(\mathbf{x}),\mathbf{x}) = \frac{\mu(p)}{k(\mathbf{x})} = \frac{\mu_0}{k(\mathbf{x})}  \exp[ \beta_\mathrm{B} p]
\end{align}
A linearized approximation, referred to as the linearized Barus model in this paper, takes the following form:
\begin{align}
	\label{Eqn:Barus_Linear}
  \alpha = \alpha_{\mathrm{LB}}(p(\mathbf{x}),\mathbf{x}) = \frac{\mu_0}{k(\mathbf{x})} [1 + \beta_\mathrm{B} p] 
\end{align}

\subsubsection{Inertial effects}
Philipp Forchheimer, an Austrian scientist, proposed a model---often referred to as Darcy-Forchheimer model---to account for the inertial effects \citep{de2012theory}. We follow \citet{chang2017modification} and express this model in the following mathematical form: 
\begin{align}
	\label{Eqn:Forchheimer_Viscosity}
  	\alpha = \alpha_{\mathrm{DF}} (\mathbf{v}(\mathbf{x}),\mathbf{x}) = \frac{\mu}{k(\mathbf{x})}
  	\left(1 +  \beta_\mathrm{F} \|\mathbf{v}\| \right)
\end{align}
where $\beta_\mathrm{F}$ is the Forchheimer coefficient, and $\|\cdot\|$ denotes the Euclidean 2-norm.

The rate of dissipation---a physical quantity with a strong mechanics and thermodynamics underpinning---will play a vital role in this paper. On this account, the total rate of dissipation is defined as:
\begin{align}
  \label{Eqn:TO_porous_total_rate_of_dissipation}
  \Phi = \int_{\Omega} \varphi(\mathbf{x}) \mathrm{d} \Omega
\end{align}
where $\varphi(\mathbf{x})$ is the rate of dissipation density. The mathematical form for $\varphi(\mathbf{x})$ (valid for all the models mentioned above---Darcy, Barus, linearized Barus and Darcy-Forchheimer) reads: 
\begin{align}
  \label{Eqn:TO_porous_dissipation_density_general}
  \varphi(\mathbf{x}) := \alpha \, \mathbf{v}(\mathbf{x}) \cdot \mathbf{v}(\mathbf{x})
\end{align}

\subsection{Design problem under \textsf{TopOpt}} Consider two porous materials with different permeabilities. The design question then is \emph{how to place these two porous materials within the domain to achieve the desired goal}. We often place a volume constraint bound on one of the materials due to cost considerations or scarcity of resources; for example, the total volume of the expensive porous material should not be more than, say, 30\% of the entire volume of the domain. We denote the design variable as $\xi(\mathbf{x})$ with the following representation:
\begin{align}
	\xi(\mathbf{x}) = \left\{\begin{array}{ll}
	1 & \text{if} \; \mathbf{x} \; \text{is occupied by the material with volume constraint }\\
	0 & \text{if} \; \mathbf{x} \; \text{is occupied by the material without volume constraint}
	\end{array} \right. \notag
\end{align}

In this paper, we formulate the design problem using the rate of dissipation for the objective function. The design problem can be either minimization or maximization of the dissipation functional. Under minimization, the proposed design problem using \textsf{TopOpt} takes the following form\footnote{The corresponding design problem under maximization can be obtained by replacing argmin with argmax.}: 
\begin{subequations}
  \label{Eqn:TopOpt_integer_programming_original}
\begin{alignat}{2}
	&\widehat{\xi}(\mathbf{x}) \leftarrow \mathop{\mathrm{argmin}}_{\xi(\mathbf{x})} \; \Phi [\xi(\mathbf{x}), \mathbf{v}(\mathbf{x}),p(\mathbf{x})] 
	&& \quad \mbox{(objective functional)} \\  \notag
	&\mbox{subject to:} \\
  \label{Eqn:TopOpt_integer_programming_original_primal}
	& \left.
	\begin{array}{ll}
	\alpha \mathbf{v}(\mathbf{x}) + \mathrm{grad}[p] = \rho \mathbf{b}(\mathbf{x}) &\; \mathrm{in} \; \Omega \\  
	\mathrm{div}[\mathbf{v}] = 0 &\; \mathrm{in} \; \Omega \\  
	\mathbf{v}(\mathbf{x}) \cdot \widehat{\mathbf{n}}(\mathbf{x}) = v_n(\mathbf{x})  &\; \mathrm{on} \; \Gamma^v \\  
	p(\mathbf{x}) = p_0(\mathbf{x}) & \; \mathrm{on} \; \Gamma^p  
	\end{array} \right\} 
	&& \quad \mbox{(state equations)} \\
	&\int_{\Omega} \xi(\mathbf{x}) \, \mathrm{d} \Omega \leq \gamma \int_{\Omega} 
	\mathrm{d} \Omega = \gamma \, \mbox{meas}(\Omega)
	&&\quad \mbox{(volume constraint)} \\
	&\xi(\mathbf{x})\in \left\{0,1\right\} \quad \forall \mathbf{x} \in \Omega
	&& \quad \mbox{(design set/space)}
\end{alignat}
\end{subequations}
where $0 \leq \gamma \leq 1$ is the bound for the volume constraint, and $\mbox{meas}(\Omega)$ denotes the area/volume of the domain.

In its primitive form, the above optimization problem is an integer programming; $\xi$ takes either 0 or 1. Such integer programming optimization problems are notoriously difficult to solve. Hence, a regularization procedure, such as SIMP \citep{bendsoe_sigmund_2013topology,bendsoe1988generating,rozvany2001aims} and MMA \citep{svanberg1987method}, is often used to convert the above problem \eqref{Eqn:TopOpt_integer_programming_original} into a nonlinear programming problem. The design space will then be the entire interval between 0 and 1 (i.e., $\xi \in [0,1]$). However, an effective regularization technique will force the design variables towards 0 or 1. The corresponding nonlinear programming problem, after regularization, takes the following form:
\begin{align}
  \mathop{\mathrm{max}}_{\xi \; \in \; \left[0,1 \right]}
  \quad \widetilde{\Phi}_\xi [\xi(\mathbf{x}),
    \mathbf{v}(\mathbf{x}),p(\mathbf{x})]
\end{align}
where $\widetilde{\Phi}_{\xi}$ is a regularized functional.

  The design problem has two main ingredients: the primal analysis and the optimization procedure. In the literature, primal analysis also goes by several other names: \emph{forward problem/model} \citep{tarantola2005inverse}, \emph{direct problem} in imaging \citep{bertero2020introduction}, \emph{solution to equilibrium equations} (especially in solid mechanics \citep{bendsoe_sigmund_2013topology}, or sometimes just plain \emph{analysis}. The central piece of primal analysis is a solution procedure that solves the state equations \eqref{Eqn:TopOpt_integer_programming_original_primal} for a fixed value of the design (field) variable $\xi(\mathbf{x})$. This solution procedure can be either analytical or numerical. The optimization procedure extremizes the objective function by satisfying the constraints as well as the state equations. The constraints can be bounds on the usage of each material, minimum length scale on the microstructure, manufacturing constraints to facilitate fabrication, to name a couple. In numerical simulations of complicated problems, both the ingredients---primal analysis and optimization procedure---are solved numerically. The selection and accuracy of numerical schemes for each of these ingredients will affect the final design outcome.

We have used the SIMP regularization and MMA algorithm in all our numerical results reported in this paper. The MMA algorithm used with SIMP (Solid Isotropic Material with Penalization) regularization method ensures that the solution steers towards a 0--1 solution where $1$ indicates the presence of the controlled material while $0$ indicates the presence of uncontrolled material. The stopping criteria---for terminating the design procedure---is to satisfy any one of the following conditions: (i) Closeness to a 0--1 solution with no apparent change in the material distribution over several consequent iterations, and the maximum number of iterations reaches 100. (ii) The relative tolerance of objective function in successive iterations less than or equal to 0.001.

\section{ON THE USE OF MINIMUM POWER THEOREM}
\label{Sec:S3_Porous_MPT}
In the literature, the minimum power theorem (\textsf{MPT}) also goes by the name: \emph{the principle of minimum power}. A few remarks about \textsf{MPT} are in order:
\begin{enumerate}
\item It is not a physical law, although its name might suggest some relation to the first law of thermodynamics. To the contrary, it is a mathematical result from the calculus of variations; a solution to the Darcy equations \eqref{Eqn:TopOpt_GE_Darcy}--\eqref{Eqn:TopOpt_GE_pBC} satisfies \textsf{MPT}, and \emph{vice versa}.
\item It is not a general principle governing the flow of fluids through porous media. Notably, \textsf{MPT} is not valid for Barus and Darcy-Forchheimer models, considered in this paper.
\end{enumerate}
The plan for the rest of this section is: we will first precisely state \textsf{MPT}. This will be followed by a discussion on how to pose a design problem, under \text{TopOpt}, based on \textsf{MPT} (but restricted to Darcy equations). Finally, we prove that \textsf{MPT} is not valid for nonlinear models. 

We start by defining the space of kinematically admissible vector fields:
\begin{align}
  \mathcal{K} := \{\widetilde{\mathbf{v}}(\mathbf{x}) \; | \;
  \mathrm{div} [\widetilde{\mathbf{v}}] = 0  \; \mathrm{in} \; \Omega, \;
  \widetilde{\mathbf{v}}(\mathbf{x}) \cdot \widehat{\mathbf{n}}(\mathbf{x}) = v_n(\mathbf{x})
  \; \mathrm{on} \; \Gamma^v  \}
\end{align}
  That is, a kinematically admissible vector field $\widetilde{\mathbf{v}}(\mathbf{x})
  \in \mathcal{K}$ needs to satisfy the continuity condition \eqref{Eqn:TopOpt_GE_Continuity}
  and the velocity boundary conditions \eqref{Eqn:TopOpt_GE_vBC}; but it is \emph{not}
  necessary for a kinematically admissible vector field to satisfy either the balance
  of linear momentum \eqref{Eqn:TopOpt_GE_Darcy} or the pressure boundary condition
  \eqref{Eqn:TopOpt_GE_pBC}.\footnote{A kinematically admissible vector field
    is similar to a virtual displacement or a virtual velocity, which arise in the fields
    such as particle and rigid body dynamics as well as continuum mechanics. A virtual
    velocity, in continuum mechanics, satisfies the velocity boundary conditions and
    internal constraints (e.g., the incompressible constraint, similar to equation
    \eqref{Eqn:TopOpt_GE_Continuity}), but not the balance of linear momentum.
    For a more detailed exposure, see \citep{rosenberg1977analytical,germain1973method}.}
We call a vector field $\mathbf{v} : \overline{\Omega} \rightarrow \mathbb{R}^{nd}$ the Darcy velocity, a special type of kinematically admissible field, if it satisfies equations \eqref{Eqn:TopOpt_GE_Darcy}--\eqref{Eqn:TopOpt_GE_pBC}. That is, $\mathbf{v}(\mathbf{x}) \in \mathcal{K}$ and satisfies equations \eqref{Eqn:TopOpt_GE_Darcy} and \eqref{Eqn:TopOpt_GE_pBC}.

The central piece of \textsf{MPT} is the mechanical power functional, defined as follows:
\begin{align}
	\label{Eqn:PMP}
	\Psi [\mathbf{w}(\mathbf{x})]
	:= \int_{\Omega} \frac{1}{2} \; \alpha \; \mathbf{w}(\mathbf{x}) \cdot \mathbf{w}(\mathbf{x}) \; \mathrm{d}\Omega
	-\int_{\Omega} \mathbf{w}(\mathbf{x}) \cdot \rho \mathbf{b}(\mathbf{x}) \; \mathrm{d}\Omega
	+\int_{\Gamma^p} \mathbf{w}(\mathbf{x}) \cdot \widehat{\mathbf{n}}(\mathbf{x}) \; p_0(\mathbf{x}) \; \mathrm{d}\Gamma
\end{align}
where $\mathbf{w(x)} \in \mathcal{K}.$

\textsf{MPT} can then be compactly written as:
\begin{align}
	\Psi [\mathbf{v}(\mathbf{x})] \; \leq \; \Psi [\widetilde{\mathbf{v}}(\mathbf{x})] \quad \forall  \widetilde{\mathbf{v}}(\mathbf{x}) \in \mathcal{K}
\end{align}
In plain words, \textsf{MPT} states that the Darcy velocity renders the minimum mechanical power among all the kinematically admissible vector fields.  Using the language of optimization theory, an alternate form of \textsf{MPT} is:
\begin{align}
	\mathbf{v}(\mathbf{x}) \leftarrow
	\mathop{\mathrm{argmin}}_{\widetilde{\mathbf{v}}(\mathbf{x}) \in \mathcal{K} } \; \Psi[\widetilde{\mathbf{v}}(\mathbf{x})]
\end{align}

A design problem, under \textsf{TopOpt}, based on MPT can be posed as follows: 
\begin{align}
\label{Eqn:TopOpt_integer_programming}
	\mathop{\mathrm{min}}_{\xi \; \in \; \left\{0,1 \right\} } \quad \mathop{\mathrm{min}}_{\widetilde{\mathbf{v}}(\mathbf{x}) \; \in \; \mathcal{K}} \quad \Psi[\widetilde{\mathbf{v}}(\mathbf{x})]
\end{align}
where $\xi$ is the design variable. The outer optimization problem is an integer programming (i.e., $\xi$ takes either 0 or 1). The corresponding nonlinear programming problem after regularization can be written as follows:
\begin{align}
	\mathop{\mathrm{min}}_{\xi \; \in \; \left[0,1 \right] } \quad \mathop{\mathrm{min}}_{\widetilde{\mathbf{v}}(\mathbf{x}) \; \in \; \mathcal{K}} \quad \widetilde{\Psi}_\xi [\widetilde{\mathbf{v}}(\mathbf{x})]
\end{align}
where $\widetilde{\Psi}_{\xi}$ is a regularized functional.

\citet{shabouei_nakshatrala_cicp} have provided a proof of \textsf{MPT} for Darcy equations. We now show that \textsf{MPT} will not hold for nonlinear models such as Darcy-Forchheimer and Barus. Thus, in our derivation, we allow the drag coefficient $\alpha$ to depend on the solution fields---$\mathbf{v}(\mathbf{x})$ and $p(\mathbf{x})$. To handle the incompressibility constraint, it is convenient to use the Lagrange multiplier method. On this account, we define the following:
\begin{align}
\mathcal{V} &:= \left\{\mathbf{v}(\mathbf{x}) \, | \, \mathbf{v}(\mathbf{x}) \cdot \widehat{\mathbf{n}}(\mathbf{x}) = v_{n}(\mathbf{x}) \; \mathrm{on} \; \Gamma^{v}\right\} \\
\mathcal{V}_{0} &:= \left\{\mathbf{v}(\mathbf{x}) \, | \, \mathbf{v}(\mathbf{x}) \cdot \widehat{\mathbf{n}}(\mathbf{x}) = 0 \; \mathrm{on} \; \Gamma^{v}\right\}
\end{align}
Appealing to the Lagrange multiplier method, we consider the following equivalent optimization problem: 
\begin{align}
\mathop{\mathrm{extremize}}_{\mathbf{v}(\mathbf{x}) \in \mathcal{V},\, p(\mathbf{x})} \quad \widehat{\Psi}[\mathbf{v}(\mathbf{x}),p(\mathbf{x})]
\end{align}
where $p(\mathbf{x})$, in addition to being the (mechanical) pressure, serves as the Lagrange multiplier enforcing the incompressibility constraint, and 
\begin{align}
  \widehat{\Psi}[\mathbf{v}(\mathbf{x}),p(\mathbf{x})] 
  &:= \Psi[\mathbf{v}(\mathbf{x})]   
  - \int_{\Omega} p(\mathbf{x}) \mathrm{div}[\mathbf{v}] \, \mathrm{d} \Omega 
\end{align}

For the solution fields to satisfy \textsf{MPT}, the necessary condition is:
\begin{align}
\delta \widehat{\Psi}[\mathbf{v}(\mathbf{x}),p(\mathbf{x})] \cdot \left(\delta \mathbf{v}(\mathbf{x}), \delta p(\mathbf{x})\right) := \left[\frac{d}{d \epsilon} \widehat{\Psi}[\mathbf{v}(\mathbf{x}) + \epsilon \, \delta \mathbf{v}(\mathbf{x}),p(\mathbf{x})+\epsilon \, \delta p(\mathbf{x})]\right]_{\epsilon =0} = 0 \notag \\ 
\quad \forall \delta \mathbf{v}(\mathbf{x}) \in  \mathcal{V}_0, \forall \delta p(\mathbf{x}) 
\end{align}
Thus, it suffices to show that this necessary condition is not met if $\alpha$ depends on the solution fields. We proceed as follows:
\begin{align}
\delta \widehat{\Psi}[\mathbf{v}(\mathbf{x}),p(\mathbf{x})] \cdot \left(\delta \mathbf{v}(\mathbf{x}),\delta p(\mathbf{x})\right) &=
\int_{\Omega} \alpha \, \mathbf{v}(\mathbf{x}) \cdot \delta \mathbf{v}(\mathbf{x}) \, \mathrm{d} \Omega
- \int_{\Omega} \rho \mathbf{b}(\mathbf{x}) \cdot \delta \mathbf{v}(\mathbf{x}) \, \mathrm{d} \Omega
+ \int_{\Gamma^{p}} p_0(\mathbf{x}) \, \delta \mathbf{v}(\mathbf{x}) \cdot \widehat{\mathbf{n}}(\mathbf{x}) \, \mathrm{d} \Gamma \notag \\
&- \int_{\Omega} p(\mathbf{x}) \, \mathrm{div}[\delta \mathbf{v}(\mathbf{x})] \, \mathrm{d} \Omega 
- \int_{\Omega} \delta p(\mathbf{x}) \, \mathrm{div}[\mathbf{v}(\mathbf{x})] \, \mathrm{d} \Omega
\notag \\ 
&+ \int_{\Omega} \frac{1}{2} \left(\frac{\partial \alpha}{\partial \mathbf{v}} \cdot \delta \mathbf{v}
+ \frac{\partial \alpha}{\partial p} \cdot \delta p
\right)
\mathbf{v}(\mathbf{x}) \cdot \mathbf{v}(\mathbf{x}) \mathrm{d} \Omega
\end{align}
Using the Green's identity, noting that the solution fields satisfy equations \eqref{Eqn:TopOpt_GE_Darcy}--\eqref{Eqn:TopOpt_GE_pBC}, and invoking $\delta \mathbf{v}(\mathbf{x}) \in \mathcal{V}_0$, we end up with:
\begin{align}
  \delta \widehat{\Psi}[\mathbf{v}(\mathbf{x}),p(\mathbf{x})] \cdot (\delta \mathbf{v}(\mathbf{x}),\delta p(\mathbf{x})) 
  &= \int_{\Omega} \frac{1}{2} \left(\frac{\partial \alpha}{\partial \mathbf{v}} \cdot \delta \mathbf{v}
+ \frac{\partial \alpha}{\partial p} \cdot \delta p
\right)
\mathbf{v}(\mathbf{x}) \cdot \mathbf{v}(\mathbf{x}) \mathrm{d} \Omega
\end{align}
For the above integral to vanish for all $\delta \mathbf{v}(\mathbf{x}) \in \mathcal{V}_0$ and for all $p(\mathbf{x})$, we must have: 
\begin{align}
  \frac{\partial \alpha}{\partial \mathbf{v}} = \mathbf{0} \quad \mathrm{and} \quad \frac{\partial \alpha}{\partial p} = 0
\end{align}
meaning that the drag function is \emph{in}dependent of the solution fields.

Since \textsf{MPT} is limited in scope, we need a general way to pose design problems under \textsf{TopOpt} for flow through porous media applications. All the studies from hereon uses the rate of dissipation functional.

\section{INSIGHTS FROM ONE-DIMENSIONAL PROBLEMS}
\label{Sec:S4_porous_1D_insights}
We will use the Darcy model and one-dimensional
problems to gain insights into three aspects: 
\begin{enumerate}
\item \emph{nature of the solution:} How does
  the rate of dissipation density depend on
  permeability $k$?
\item \emph{extremization of the objective
  function:} Is it physical to minimize or
  maximize $\Phi$?
\item \emph{volumetric bound constraint:} On
  which material---high or low permeability---should
  we place a bound on its volume fraction?
\end{enumerate}

We start by noting the rate of dissipation
density under the Darcy model is: 
\begin{align}
  \label{Eqn:TO_porous_dissipation_density_Darcy}
  \varphi(\mathbf{x}) = \frac{\mu}{k}
  \mathbf{v}(\mathbf{x}) \cdot \mathbf{v}(\mathbf{x}) 
\end{align}
A cavalier look at equation \eqref{Eqn:TO_porous_dissipation_density_Darcy} might suggest answer to the first question is trivial---$\varphi$ is inversely proportional to $k$. This conclusion seems even more plausible by noting that the continuity equation (i.e., balance of mass considering the incompressibility of the fluid) in 1D is:
\begin{align}
  \label{Eqn1D_Continuity}
  \mathrm{div}[\mathbf{v}] = \frac{dv}{dx} = 0 
\end{align}
which implies that the velocity $v(x) =
\mathrm{constant}$. But this reasoning
does not consider the possibility that
the velocity, although a constant for 1D
problems, could depend on $k$. That is,
the rate of dissipation density depends
on $k$ explicitly while it could also
depend on $k$ implicitly via the velocity
field.
To prove this point and to establish the exact
dependence of $\varphi$ on $k$, we will consider
two types of boundary value problems: pressure-driven
and velocity-driven. In a pressure-driven problem,
pressure boundary conditions are prescribed on the
entire boundary. While for a velocity-driven problem,
velocity boundary conditions are prescribed at least
on a part of the boundary.

\subsection{Pressure-driven problem}
\label{Subsec:TopOpt_pressure_driven_1D}
Consider a one-dimensional problem with length
$L$ as detailed in figure \ref{Fig:1D_Pressure}.
The pressure conditions are applied on the either
ends of the domain with $p_L > p_R$.
\begin{figure}[h]
  \includegraphics[scale=0.75]{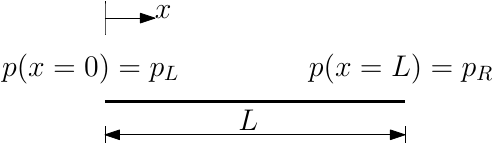}
  \caption{A pictorial description of the one-dimensional pressure-driven problem. \label{Fig:1D_Pressure}}
\end{figure}
The equations governing the flow of the fluid in the domain are:
\begin{subequations}
  \begin{alignat}{2}
    \label{Eqn:1D_porous_Darcy}
    &\frac{\mu}{k} v(x) + \frac{d p}{d x} = 0
    &&\quad \mathrm{in} \; (0, L) \\
    \label{Eqn:1D_porous_Continuity}
    &\frac{d v}{d x} = 0
    &&\quad \mathrm{in} \; (0, L) \\
    \label{Eqn:1D_porous_BCL}
    &p(x = 0) = p_L \\
    \label{Eqn:1D_porous_BCR}
    &p(x = L) = p_R
  \end{alignat}
\end{subequations}
The corresponding analytical solution is:
\begin{align}
  p(x) = \left(1-\frac{x}{L}\right) p_L + \frac{x}{L} p_R 
  \quad \mathrm{and} \quad
  v(x) &= \frac{k}{\mu} \frac{(p_L - p_R)}{L}
\end{align}
Although the velocity is a constant, as mentioned
earlier, it depends on the permeability. Accordingly,
the rate of dissipation density will be:
\begin{align}
  \label{Eqn:TO_porous_1D_Phi}
  \varphi(x) &= \frac{\mu}{k} \mathbf{v(x) \cdot v(x)}  
  = \frac{\mu}{k} v^{2}(x) 
  = \frac{k}{\mu} \frac{\left(p_L - p_R \right)^2}{L^2}
\end{align}
Thus, for the chosen pressure-driven problem,
$\varphi(x)$ is proportional to $k$ (and not
to $k^{-1}$). 

\subsubsection{A design problem for pressure-driven problems}
In the applications of pressure-driven problems, the goal often is to dissipate the (kinetic) energy as much as possible as the fluid passes through the porous domain. Thus, maximizing the total rate of dissipation is a physically meaningful choice for the objective function under \textsf{TopOpt}.

To decide on the volumetric bound, we need to assess the combined effect of this objective function alongside how the rate of dissipation density depends on the permeability for pressure-driven problems. Since $\varphi \propto k$, one needs to place a bound on the use of high-permeability material in the domain. Otherwise, we will end up with a trivial solution---placing the high-permeability material throughout the domain. Other reasons such as practical and economic could also compel to place a restriction on the use of high-permeability. On the practical front, a high-permeability material often has low strength, and too much use of such a material could comprise the structural integrity of the device. On the economic front, a high-permeability material is often an engineered material, making it expensive. All these factors justify placing a bound on the use of the high-permeability material.

Summarizing, a well-posed design problem for pressure-driven problems can be: \emph{maximization of the total rate of dissipation subject to a volume constraint bound on the high-permeability material}. 

\subsection{Velocity-driven problem} We now consider
a one-dimensional problem similar to the one used in
\S\ref{Subsec:TopOpt_pressure_driven_1D}; but the left
end of the domain is prescribed with a velocity boundary
condition (see figure \ref{Fig:1D_Vel-Pressure}).
\begin{figure}[h]
  \includegraphics[scale=0.75]{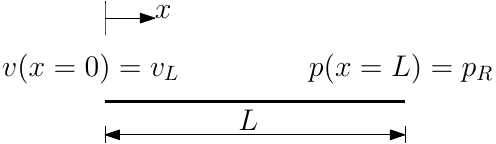}
  \caption{A pictorial description of the
    one-dimensional velocity-driven problem.
    \label{Fig:1D_Vel-Pressure}}
\end{figure}

The governing equations remain the same
as the previous case expect equation
\eqref{Eqn:1D_porous_BCL} is now replaced
with: 
\begin{align}
  \label{Eqn:1Dvel_porous_BCL}
  &v(x=0) = v_L
\end{align}
The corresponding analytical solution for the fields is: 
\begin{align}
  v(x) = v_L
  \quad \mathrm{and} \quad 
  p(x) = p_R + \frac{\mu}{k} v_L (L-x)
\end{align}
For this case, the velocity is a constant
independent of $k$. The corresponding
rate of dissipation density is:
\begin{align}
  \varphi(x) = \frac{\mu}{k} v^2(x) = \frac{\mu}{k} v_L^2 
\end{align}
Since $\mu$, $L$ and $v_L$ are independent of
$k$, $\varphi(x)$ is constant and is inversely
proportional to $k$.

\subsubsection{A design problem for velocity-driven problems}
In the applications involving velocity-driven problems, a continuous supply of power/energy will be needed for maintaining adequate flow---required to meet the velocity boundary condition. Therefore, it is desirable to minimize the rate of dissipation for these kinds of problems.
Noting that $\varphi \propto k^{-1}$ for velocity-driven problems, the minimization of the total rate of dissipation will require a volume constraint bound on the high-permeability material. Otherwise, similar to pressure-driven problems, the solution to the design problem becomes trivial.
Summarizing, a well-posed design problem for velocity-driven problems can be: \emph{minimization of the total rate of dissipation with a volumetric bound constraint on the high-permeability material}.

Although our above arguments are based on prototypical 1D problems, the conclusions are applicable even to other problems and to the nonlinear models mentioned in \S\ref{Sec:S2_Porous_GE}. The two design problems given above can guide posing other design problems.

\section{OPTIMAL LAYOUTS FOR ONE-DIMENSIONAL PROBLEMS}
\label{Sec:S5_porous_1D_optimal_solutions}
We will analyze one-dimensional problems similar to the ones considered in \S\ref{Sec:S4_porous_1D_insights}. But we will use nonlinear modifications of the Darcy model. The design problem is to place two porous materials with different permeabilities, $k_1$ and $k_2$, in the domain. The objective function and the volumetric bound constraint are the same as discussed earlier.

Using the analytical solutions to these 1D problems, the effect of nonlinearities on the optimal material layouts will be quantified. To derive analytical solution, we assume the presence of a single material interface; its location is at $x = \xi$. That is, the spatial distribution of the permeability is:
\begin{align}
  \label{Eqn:TopOpt_1D_permeability_field}
  k(x) = \left\{\begin{array}{ll}
  k_1 & 0 < x < \xi \\
  k_2 & \xi < x < L
  \end{array} \right.
\end{align}
For convenience and for a later use, we will use $\xi^{\pm}$
to denote the one-sided limits at the interface. That is,
a superscript minus sign on $\xi$ indicates a limit from
the left-side; a similar interpretation holds for the
superscript plus sign on $\xi$.

\subsection{Pressure-driven problem}
For non-dimensionalization, we take the
following (pressure, length and viscosity)
as reference quantities:
\begin{align}
  \label{Eqn:TopOpt_1D_pressure_driven_ND_RQ}
  p_\mathrm{ref} = p_L - p_R \;
  [\mathrm{M}^{1} \mathrm{L}^{-1} \mathrm{T}^{-2}], \quad
  L_\mathrm{ref} = L \;
  [\mathrm{M}^{0}\mathrm{L}^{1}\mathrm{T}^{0}]
  \quad \mathrm{and} \quad
  \mu_\mathrm{ref} = \mu_0 \exp[\beta_{\mathrm{B}} p_{\mathrm{R}}]\;
     [\mathrm{M}^{1} \mathrm{L}^{-1} \mathrm{T}^{-1}]
\end{align}
Note that $\beta_{\mathrm{B}} = 0$ for the Darcy and
Darcy-Forchheimer models. These references quantities
give rise to the following non-dimensional quantities:
\begin{align}
  \label{Eqn:TO_porous_1D_pressure_ND}
  \bar{x} = \frac{x}{L_{\mathrm{ref}}}, \;
  \bar{k} = \frac{k}{L^2_{\mathrm{ref}}}, \;
  \bar{p} = \frac{p - p_{\mathrm{R}}}{p_{\mathrm{ref}}}, \;
  \bar{\mu} = \frac{\mu}{\mu_{\mathrm{ref}}}, \;
  \bar{v} = \frac{v \mu_{\mathrm{ref}}}{L_{\mathrm{ref}} p_{\mathrm{ref}}}, \;
  \overline{\rho b} =
  \frac{\rho b L_{\mathrm{ref}}}{p_{\mathrm{ref}}}, \notag \\
  \bar{\beta}_{\mathrm{F}} =
  \frac{\beta_{\mathrm{F}} L_{\mathrm{ref}} p_{\mathrm{ref}}}{\mu_{\mathrm{ref}}}
  \quad \mathrm{and} \quad
  \bar{\beta}_{\mathrm{B}} =
  \beta_{\mathrm{B}} p_{\mathrm{ref}}
\end{align}
Then the governing equations for the primal
analysis can be written as follows:
\begin{subequations}
  \begin{alignat}{2}
    \label{Eqn:TO_porous_1D_anal_BoLM}
    &\frac{\bar{\mu}}{\bar{k}(\bar{x})} \bar{v}
    + \frac{d \bar{p}}{d \bar{x}} = \overline{\rho b} = 0
    &&\quad \forall \bar{x} \in (0, 1) \\
    \label{Eqn:TO_porous_1D_anal_continuity}
    &\frac{d \bar{v}}{d \bar{x}} = 0
    &&\quad \forall \bar{x} \in (0, 1) \\
    \label{Eqn:TO_porous_1D_anal_pL}
    &\bar{p}(\bar{x} = 0) = 1 \\
    \label{Eqn:TO_porous_1D_anal_pR}
    &\bar{p}(\bar{x} = 1) = 0
  \end{alignat}
\end{subequations}
In addition, the jump conditions
at the material interface read:
\begin{subequations}
  \begin{align}
    &\bar{v}(\bar{x} = \bar{\xi}^{-}) = \bar{v}(\bar{x} = \bar{\xi}^{+}) \\
    &\bar{p}(\bar{x} = \bar{\xi}^{-}) = \bar{p}(\bar{x} = \bar{\xi}^{+})
  \end{align}
\end{subequations}
where $\bar{\xi}^{\pm} = \xi^{\pm} / L_{\mathrm{ref}}$.
From hereon we will drop the superposed overlines for
convenience; all the quantities in the rest of this
subsection should be considered as non-dimensional.

For convenience, we define:
\begin{align}
  \Upsilon_{\mathrm{1D}}(\xi) := \frac{\xi}{k_1} + \frac{1 - \xi}{k_2}
\end{align}
where $k_1$ and $k_2$ non-dimensional permeabilities of the two given materials (cf. equations \eqref{Eqn:TopOpt_1D_permeability_field} and $\eqref{Eqn:TO_porous_1D_pressure_ND}_2$). We will soon see that this quantity captures the variation of the total rate of dissipation with the design variable---the location of the material interface, and allows to express the
analytical expressions in a compact form.

\subsubsection{Primal analysis under the Barus model}
The balance of linear momentum
\eqref{Eqn:TO_porous_1D_anal_BoLM}
for this case reads:
\begin{align}
  &\frac{1}{k} \exp[\beta_{\mathrm{B}} p] v + \frac{dp}{dx} = 0
  \quad \forall x \in (0,1)
\end{align}
The analytical solution for the velocity field is:
\begin{align}
  v(x) &= C = \frac{1}{\beta_{\mathrm{B}}}
  \left(1 - \exp[-\beta_{\mathrm{B}}] \right)
  \Upsilon_{\mathrm{1D}}^{-1}(\xi)
\end{align}
The corresponding solution for the pressure field is:
\begin{align}
  p(x) &= \left\{\begin{array}{ll}
  -\frac{1}{\beta_{\mathrm{B}}} \ln\left[
    \frac{\beta_{\mathrm{B}} C x}{k_1} + \exp[-\beta_{\mathrm{B}}]\right]
  & \quad 0 \leq x < \xi \\ \\
    -\frac{1}{\beta_{\mathrm{B}}} \ln\left[
    1 - \frac{\beta_{\mathrm{B}} C (1 - x)}{k_2} \right]
  & \quad \xi < x \leq 1
  \end{array} \right.
\end{align}
The corresponding total rate of dissipation is:
\begin{align}
  \Phi = \int_{0}^{1} \frac{\exp[\beta_{\mathrm{B}}p]}{k(x)}
  v^{2}(x) \mathrm{d} x = C = \frac{1}{\beta_{\mathrm{B}}}
  \left(1 - \exp[-\beta_{\mathrm{B}}] \right)
  \Upsilon^{-1}_{\mathrm{1D}}(\xi)
\end{align}

\subsubsection{Primal analysis under the Darcy-Forchheimer model}
The governing equation for the balance of linear
momentum for this case reads:
\begin{align}
  \label{Eqn:TopOpt_1D_DF_pressure_driven_BoLM}
  \frac{1}{k(x)}(1 + \beta_{\mathrm{F}}|v|) v
  + \frac{dp}{dx} = 0 \quad \forall x \in (0,1)
\end{align}
There are two (mathematical) solutions for the
velocity field of which one of them is unphysical.
The physical solution for the velocity field is:
\begin{align}
  v(x) = C = \frac{\sqrt{1 + 4 \beta_{\mathrm{F}}
      \Upsilon_{\mathrm{1D}}^{-1}(\xi)} - 1}{2 \beta_{\mathrm{F}}}
\end{align}
The corresponding pressure field is:
\begin{align}
  p(x) = \left\{\begin{array}{ll}
  - \frac{1}{k_1} (1 + \beta_{\mathrm{F}} \, C)
  C x + 1
  & \quad 0 \leq x < \xi \\ \\
  - \frac{1}{k_2} (1 + \beta_{\mathrm{F}} \, C)
  C (x - 1)
  & \quad \xi < x \leq 1
  \end{array} \right.
\end{align}
Thus, the total rate of dissipation is:
\begin{align}
  \Phi(\xi) = \int_{0}^{1}
  \left(\frac{1 + \beta_{\mathrm{F}} |v|}{k(x)}\right)
  v^{2}(x) \, \mathrm{d} x
  = C = \frac{\sqrt{1 + 4 \beta_{\mathrm{F}}
      \Upsilon_{\mathrm{1D}}^{-1}(\xi)} - 1}{2 \beta_{\mathrm{F}}}
\end{align}

\subsection{Velocity-driven problem}

We take the following (velocity, length and viscosity) as the reference quantities for the non-dimensionalization:
\begin{align}
  v_\mathrm{ref} = v_{\mathrm{L}} \;
  [\mathrm{M}^{0} \mathrm{L}^{1} \mathrm{T}^{-1}], \quad
  L_\mathrm{ref} = L \;
  [\mathrm{M}^{0}\mathrm{L}^{1}\mathrm{T}^{0}]
  \quad \mathrm{and} \quad
  \mu_\mathrm{ref} = \mu_0 (1 + \beta_{\mathrm{B}} p_{\mathrm{R}}) \;
     [\mathrm{M}^{1} \mathrm{L}^{-1} \mathrm{T}^{-1}]
\end{align}
where $v_{\mathrm{L}}$ is the prescribed velocity
at the left end of the domain. Recall that
$\beta_{\mathrm{B}} = 0$ for the Darcy and
Darcy-Forchheimer models, and $\beta_{\mathrm{F}} = 0$
for the Darcy and Barus models. These reference quantities introduce the following reference pressure:
\begin{align}
  p_{\mathrm{ref}} = \frac{\mu_{\mathrm{ref}} v_{\mathrm{ref}}}{L_{\mathrm{ref}}}
  = \frac{\mu_0 (1 + \beta_{\mathrm{B}} p_{\mathrm{R}}) v_{\mathrm{L}}}{L}
\end{align}
The corresponding non-dimensional quantities will be:
\begin{align}
  \bar{v} = \frac{v}{v_{\mathrm{ref}}}, \;
  \bar{\beta}_{\mathrm{F}} =
  \beta_{\mathrm{F}} v_{\mathrm{ref}}
  \quad \mathrm{and} \quad
  \bar{\beta}_{\mathrm{B}} =
  \frac{\beta_{\mathrm{B}} p_{\mathrm{ref}}}{1 + \beta_{\mathrm{B}} p_{\mathrm{R}}}
  = \frac{\beta_{\mathrm{B}} \mu_0
    v_{\mathrm{L}}}{L}
\end{align}
while the definitions for the rest of the quantities will be the same as before (i.e., equation \eqref{Eqn:TopOpt_1D_pressure_driven_ND_RQ}) but based on the new reference quantities defined above. The governing equations in non-dimensional
form (without overlines) are:
\begin{subequations}
  \begin{alignat}{2}
    \label{Eqn:TO_porous_1D_anal_BoLM_VD}
    &\frac{\mu}{k(x)} v
    + \frac{d p}{d x} = 0
    &&\quad \forall x \in (0, 1) \\
    \label{Eqn:TO_porous_1D_anal_continuity_VD}
    &\frac{d v}{d x} = 0
    &&\quad \forall x \in (0, 1) \\
    \label{Eqn:TO_porous_1D_anal_vL_VD}
    &v(x = 0) = 1 \\
    \label{Eqn:TO_porous_1D_anal_pR_VD}
    &p(x = 1) = 0
  \end{alignat}
\end{subequations}

\subsubsection{Primal analysis under the linearized Barus model}
We have considered the linearized Barus model as for velocity-driven problems primal solutions might not even exist under the (nonlinear) Barus model. This is the case with the above boundary value problem for some values of prescribed velocity at the inlet and prescribed pressure at the outlet. A thorough discussion of this issue is beyond the scope of this paper and will be dealt elsewhere.

The non-dimensional form of the balance
of linear momentum reads:
\begin{align}
  \left(\frac{1 + \beta_{\mathrm{B}} p}{k(x)}\right) v
  + \frac{dp}{dx} = 0
  \quad \forall x \in (0,1)
\end{align}
The solution fields are:
\begin{align}
  v(x) &= 1 \\
  p(x) &= \left\{\begin{array}{ll}
  \frac{1}{\beta_{\mathrm{B}}}
  \left( \exp[\beta_{\mathrm{B}} \Upsilon_{\mathrm{1D}}]
    \exp\left[-\frac{x\beta_{\mathrm{B}}}{k_1}\right] - 1\right)
  & \quad 0 \leq x < \xi \\ \\
  \frac{1}{\beta_{\mathrm{B}}}
  \left(\exp\left[\frac{\beta_{\mathrm{B}}}{k_2}(1 - x)\right] - 1\right)
  & \quad \xi < x \leq 1
  \end{array} \right.
\end{align}
The total rate of dissipation is:
\begin{align}
  \Phi(\xi) = \frac{1}{\beta_{\mathrm{B}}}
  \left(\exp[\beta_{\mathrm{B}} \Upsilon_{\mathrm{1D}}(\xi)] - 1\right)
\end{align}

\subsubsection{Primal analysis under the Darcy-Forchheimer model}
The governing equation for the balance of linear momentum will be the same as given in equation \eqref{Eqn:TopOpt_1D_DF_pressure_driven_BoLM}. The solution fields are:
\begin{align}
  v(x) &= 1 \\
  p(x) &= \left\{\begin{array}{ll}
  (1 + \beta_{\mathrm{F}}) \Upsilon_{\mathrm{1D}}
  -\left(\frac{1 + \beta_{\mathrm{F}}}{k_1}\right) x
  & \quad 0 \leq x < \xi \\ \\
    \left(\frac{1 + \beta_{\mathrm{F}}}{k_2}\right)(1 - x) & \quad \xi < x \leq 1
  \end{array} \right.
\end{align}
The total rate of dissipation is:
\begin{align}
  \Phi(\xi) = (1 + \beta_{\mathrm{F}})
  \Upsilon_{\mathrm{1D}}(\xi)
\end{align}

\subsection{Optimal material layout}
In the above solutions, $\Phi \propto \Upsilon_{\mathrm{1D}}^{-1}(\xi)$ for pressure-driven problems and $\Phi \propto \Upsilon_{\mathrm{1D}}(\xi)$ for velocity-driven problems. Noting the total rate of dissipation is maximized for pressure-driven problem while it is minimized for velocity-driven, both these cases lead to the minimization of $\Upsilon_{\mathrm{1D}}(\xi)$ with respect to $\xi$ satisfying the volumetric bound constraint on the high-permeability material.

Under the single interface assumption, given two materials with permeabilities $k_{H} > k_L$, the design problem reduces to checking which of the two cases---placing the high-permeability material at the inflow or at the outflow---renders minimum $\Upsilon_{\mathrm{1D}}$. It so happens that in 1D, given the mathematical structure of $\Upsilon_{\mathrm{1D}}(\xi)$, high-permeability material can be placed either at the inflow or outflow as long as the volumetric constraint bound is met. In fact, it can be shown by considering multiple material interfaces that the high-permeability can be placed anywhere within
the domain (need not be contiguous), as long as it takes up the whole volume
constraint; however, this is not true in higher dimensions. Moreover, the optimal material layout is the same in all the models considered (i.e., Darcy model and its nonlinear generalizations).

\section{OPTIMAL LAYOUTS FOR A 2D AXISYMMETRIC PROBLEM}
\label{Sec:S6_Porous_AxiSym_2D}

Consider a two-dimensional domain comprising two concentric circles with an inner radius $r_i$ and an outer radius $r_o > r_i$. A pressure $p_i$ is applied on the inner boundary, and the outer boundary is subject to a pressure $p_o < p_i$; see figure \ref{Fig:Conc_cyln_BVP}. We will obtain the optimal material distribution in the domain under the Darcy model with two materials of different permeabilities: $k_1$ and $k_2$. A volume constraint, denoted by $\gamma$, is placed on the high-permeability material. We will take the maximization of rate of dissipation as the objective function.

\begin{figure}[h]
  \includegraphics[scale=0.5]{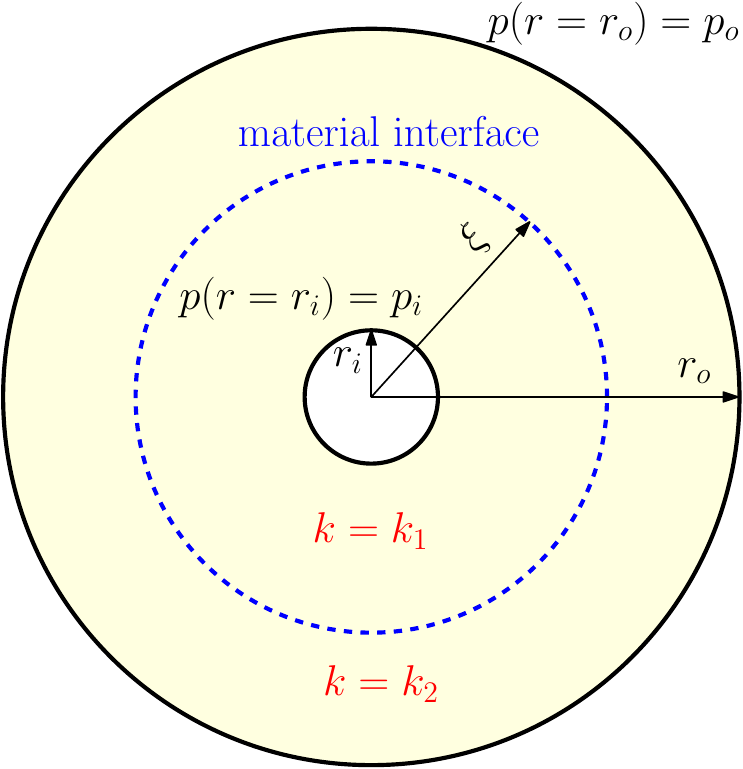}
  \caption{A pictorial description of the boundary value problem. The inner boundary is subject to a pressure $p = p_i$, and the outer boundary is subject to pressure $p = p_o$. \label{Fig:Conc_cyln_BVP}}
\end{figure}

\subsection{Analytical solution for the optimal design}
In deriving the analytical solution, we assume that each material is present in a contiguous and symmetric manner, and a single boundary exists between the two materials over the entire domain (cf. material interface shown in figure \ref{Fig:Conc_cyln_BVP}). However, such material layout might not furnish us with the optimal solution. But we will show later that numerical simulations confirm the validity of our assumption.

We will use cylindrical polar coordinates to take the advantage of axisymmetry. Then the assumption of a single material interface allows us to write the spatial dependence of the permeability as follows:
\begin{align}
	k(r) = \left\{\begin{array}{ll}
		k_1 & r_i < r < \xi \\
		k_2 & \xi < r < r_o
	\end{array} \right.
\end{align}
The governing equations corresponding to the primal analysis take the following form:
\begin{subequations}
  \begin{alignat}{2}
    \label{Eqn:TO_porous_Cylinder_Darcy}
    &\frac{\mu}{k(r)} v_r + \frac{d p}{d r} = 0
    &&\quad \mathrm{in} \; (r_i, r_o) \\
    \label{Eqn:TO_porous_Cylinder_Continuity}
    &\frac{1}{r} \frac{d (r v_r)}{d r} = 0
    &&\quad \mathrm{in} \; (r_i,r_o) \\
    \label{Eqn:TO_porous_Cylinder_BCi}
    &p(r = r_i) = p_i \\
    \label{Eqn:TO_porous_Cylinder_BCo}
    &p(r = r_o) = p_o
  \end{alignat}
\end{subequations}
where $v_r(r)$ denotes the radial component of the
velocity. In addition, the jump conditions at the
material interface take the following form:
\begin{subequations}
  \begin{align}
    \label{Eqn:TO_porous_Cylinder_jump_pressure}
    &p(r = \xi^{-}) = p(r = \xi^{+}) \\
        \label{Eqn:TO_porous_Cylinder_jump_velocity}
    &v_{r}(r = \xi^{-}) = v_r(r = \xi^{+})
  \end{align}
\end{subequations}
where $\xi^{\pm}$ denotes the one-sided limits.

Equations \eqref{Eqn:TO_porous_Cylinder_Continuity}
and \eqref{Eqn:TO_porous_Cylinder_jump_velocity} imply:
\begin{align}
  \label{Eqn:TO_porous_Cylinder_Vel}
  v_r(r) = \frac{C}{r}
\end{align}
where $C$ is a constant that needs to be determined. However, it is imperative to note that $C$ depends on the permeabilities. Using equation \eqref{Eqn:TO_porous_Cylinder_Darcy}, the solution for the pressure field takes the following form:
\begin{align}
  p(r) = \left\{\begin{array}{ll}
  \frac{C \mu}{k_1} \ln(r) + C_1 & \quad r_i \leq r < \xi \\ \\
  \frac{C \mu}{k_2} \ln(r) + C_2 & \quad \xi < r \leq r_o
  \end{array}\right.
\end{align}
where $C_1$ and $C_2$ are constants. The boundary conditions \eqref{Eqn:TO_porous_Cylinder_BCi}--\eqref{Eqn:TO_porous_Cylinder_BCo} will furnish these constants, resulting in the following expression for the pressure field:
\begin{align}
  p(r) = \left\{\begin{array}{ll}
  p_i + \frac{C \mu}{k_1} \ln(r/r_i) & \quad r_i \leq r < \xi \\ \\
  p_o + \frac{C \mu}{k_2} \ln(r/r_o) & \quad \xi < r \leq r_o
  \end{array}\right.
\end{align}
The jump condition for the pressure
\eqref{Eqn:TO_porous_Cylinder_jump_pressure}
implies:
\begin{align}
  C = \frac{1}{\mu} (p_o - p_i) \Upsilon_{\mathrm{2D}}^{-1}(\xi)
\end{align}
where $\Upsilon_{\mathrm{2D}}(\xi)$, introduced for
convenience, is defined as follows:
\begin{align}
  \label{Eqn:TopOpt_Upsilon_2D}
  \Upsilon_{\mathrm{2D}}(\xi) := \frac{1}{k_1}
  \ln\left(\frac{\xi}{r_i}
  \right) + \frac{1}{k_2}
  \ln \left(\frac{r_o}{\xi}\right)
\end{align}

Based on the above solution, the total rate
of dissipation takes the following form:
\begin{align}
  \Phi(\xi) = \int_{r_i}^{r_o} \frac{\mu}{k} \left(v_r\right)^2 (2 \pi r) dr
  &= 2 \pi \mu C^2 \left(\frac{1}{k_1} \int_{r_i}^{\xi^{-}} \frac{1}{r} dr + \frac{1}{k_2} \int_{\xi^{+}}^{r_o} \frac{1}{r} dr
  \right)
  = \frac{2 \pi (p_o - p_i)^2}{\mu}
  \Upsilon_{\mathrm{2D}}^{-1}(\xi)
\end{align}
Noting that $p_i$, $p_o$ and $\mu > 0$ are
given constants and are independent of the
design variable $\xi$. Thus, maximizing
$\Phi(\xi)$ is equivalent to minimizing
$\Upsilon_{\mathrm{2D}}(\xi)$. That is,
\begin{align}
  \label{Eqn:TopOpt_2D_axisym_optim_problem}
  \widehat{\xi}_{\mathrm{2D}} \leftarrow
  \mathop{\mathrm{argmax}}_{\xi} \; \Phi(\xi) \equiv
  \mathop{\mathrm{argmin}}_{\xi} \; \Upsilon_{\mathrm{2D}}(\xi)
\end{align}
where $\widehat{\xi}_{\mathrm{2D}}$ is the optimum
location of the material interface. For further
analysis, we will denote $k_L$ and $k_H$ to denote,
respectively, the low and high permeabilities, and
consider the following two cases.

\subsubsection{High-permeability material near the inner circle}
Under this case, $k_1 = k_H$ and $k_2 = k_L$.
By substituting these values into equation
\eqref{Eqn:TopOpt_Upsilon_2D}, we define the
corresponding $\Upsilon_{\mathrm{2D}}(\xi)$ as:
\begin{align}
  \Upsilon^{(1)}_{\mathrm{2D}}(\xi) := \frac{1}{k_H}
  \ln\left(\frac{\xi}{r_i}
  \right) + \frac{1}{k_L}
  \ln \left(\frac{r_o}{\xi}\right)
  = \frac{1}{k_L} \ln(r_o) - \frac{1}{k_H}
  \ln(r_i) + \left(\frac{1}{k_H} - \frac{1}{k_L}
  \right)  \ln(\xi)
\end{align}
Since $k_H \geq k_L$, we note that $\Upsilon_{\mathrm{2D}}(\xi)$
decreases with an increase in $\xi$. The volumetric constraint
takes the following form:
\begin{align}
  \frac{2 \pi (\xi^2 - r_i^2)}{2 \pi (r_o^2 - r_i^2)} \leq \gamma
\end{align}
which implies that
\begin{align}
  \xi^2 \leq (1 - \gamma) r_i^2 + \gamma r_o^2
\end{align}
The above inequality implies that the minimum
of $\Upsilon^{(1)}_{\mathrm{2D}}(\xi)$ occurs at
\begin{align}
  \label{Eqn:TO_porous_definition_of_xi1}
  & \widehat{\xi}^{(1)}_{\mathrm{2D}}
  := \sqrt{(1 - \gamma) r_{i}^2 + \gamma r_o^2}
\end{align}
We will denote this minimum as follows:
\begin{align}
  \Upsilon_{\mathrm{2D,min}}^{(1)} :=
  \Upsilon^{(1)}_{\mathrm{2D}}\left(\widehat{\xi}^{(1)}_{\mathrm{2D}}\right)
  = \frac{1}{k_L} \ln(r_o) - \frac{1}{k_H} \ln(r_i)
  + \left(\frac{1}{k_H} - \frac{1}{k_L}\right)
  \ln\left(\widehat{\xi}^{(1)}_{\mathrm{2D}}\right)
\end{align}
The corresponding maximum rate of dissipation
will be:
\begin{align}
  \Phi^{(1)}_{\mathrm{max}} = \frac{2 \pi (p_o - p_i)^2}{\mu}
  \left(\Upsilon^{(1)}_{\mathrm{2D,min}}\right)^{-1}
\end{align}

\subsubsection{High-permeability material near the outer circle}
Under this case, $k_1 = k_L$ and $k_2 = k_H$. For
this case, we define
\begin{align}
  \Upsilon^{(2)}_{\mathrm{2D}}(\xi) := \frac{1}{k_L}
  \ln\left(\frac{\xi}{r_i}
  \right) + \frac{1}{k_H}
  \ln \left(\frac{r_o}{\xi}\right)
  = \frac{1}{k_H} \ln(r_o) - \frac{1}{k_L}
  \ln(r_i) + \left(\frac{1}{k_L} - \frac{1}{k_H}
  \right)  \ln(\xi)
\end{align}
and note that $\Upsilon^{(2)}_{\mathrm{2D}}(\xi)$ decreases
with a decrease in $\xi$. The volumetric
constraint for the second case takes the
following form:
\begin{align}
  \gamma r_i^2 + (1 - \gamma) r_o^2 \leq \xi^2
\end{align}
The above inequality implies that the maximum
of $\Upsilon^{(2)}_{\mathrm{2D}}(\xi)$ occurs at
\begin{align}
    \label{Eqn:TO_porous_definition_of_xi2}
    &\widehat{\xi}^{(2)}_{\mathrm{2D}} :=
    \sqrt{\gamma r_{i}^2 + (1 - \gamma) r_o^2}
\end{align}
We will denote this minimum by
\begin{align}
  \Upsilon_{\mathrm{2D,min}}^{(2)} :=
   \Upsilon^{(2)}_{\mathrm{2D}}\left(\widehat{\xi}_{\mathrm{2D}}^{(2)}\right)  =
  \frac{1}{k_H} \ln(r_o) - \frac{1}{k_L} \ln(r_i)
  + \left(\frac{1}{k_L} - \frac{1}{k_H}\right)
  \ln\left(\widehat{\xi}^{(2)}_{\mathrm{2D}}\right)
\end{align}
The corresponding maximum rate of dissipation
will be:
\begin{align}
  \Phi^{(2)}_{\mathrm{max}} = \frac{2 \pi (p_o - p_i)^2}{\mu}
  \left(\Upsilon^{(2)}_{\mathrm{2D,min}}\right)^{-1}
\end{align}

\begin{proposition}
  For any volumetric constraint $0 \leq \gamma \leq 1$,
  $\Upsilon_{\mathrm{2D,min}}^{(1)} \leq \Upsilon_{\mathrm{2D,min}}^{(2)}$.
\end{proposition}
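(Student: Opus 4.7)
The plan is to compute the difference $\Upsilon_{\mathrm{2D,min}}^{(2)} - \Upsilon_{\mathrm{2D,min}}^{(1)}$ directly from the closed-form expressions provided above, factor it into a product of two nonnegative terms, and conclude. I prefer this head-on algebraic approach over any variational argument because the minima have already been given in explicit form, so the inequality reduces to a clean comparison.

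First, I would subtract the two expressions for $\Upsilon_{\mathrm{2D,min}}^{(1)}$ and $\Upsilon_{\mathrm{2D,min}}^{(2)}$. The $\ln(r_o)$ and $\ln(r_i)$ terms combine into $(\tfrac{1}{k_H} - \tfrac{1}{k_L})\ln(r_o r_i)$, and using the logarithm laws I expect the remaining terms to collect as $(\tfrac{1}{k_L} - \tfrac{1}{k_H})\ln(\widehat{\xi}_{\mathrm{2D}}^{(1)}\widehat{\xi}_{\mathrm{2D}}^{(2)})$. The two pieces then share the common factor $\tfrac{1}{k_L} - \tfrac{1}{k_H}$, giving
\begin{align*}
\Upsilon_{\mathrm{2D,min}}^{(2)} - \Upsilon_{\mathrm{2D,min}}^{(1)}
= \left(\frac{1}{k_L} - \frac{1}{k_H}\right)
\ln\!\left(\frac{\widehat{\xi}_{\mathrm{2D}}^{(1)}\,\widehat{\xi}_{\mathrm{2D}}^{(2)}}{r_i\, r_o}\right).
\end{align*}
The first factor is nonnegative since $k_H \geq k_L$, so the whole task reduces to showing that the argument of the logarithm is at least one, that is, $\widehat{\xi}_{\mathrm{2D}}^{(1)}\widehat{\xi}_{\mathrm{2D}}^{(2)} \geq r_i r_o$.

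For this last step I see two clean routes, and I would present the shorter one. Squaring and using the definitions in equations \eqref{Eqn:TO_porous_definition_of_xi1} and \eqref{Eqn:TO_porous_definition_of_xi2}, I would apply the weighted arithmetic--geometric mean inequality to each factor: $(1-\gamma)r_i^2 + \gamma r_o^2 \geq r_i^{2(1-\gamma)} r_o^{2\gamma}$ and $\gamma r_i^2 + (1-\gamma)r_o^2 \geq r_i^{2\gamma} r_o^{2(1-\gamma)}$; multiplying these yields $(\widehat{\xi}_{\mathrm{2D}}^{(1)})^2 (\widehat{\xi}_{\mathrm{2D}}^{(2)})^2 \geq r_i^2 r_o^2$, as required. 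Alternatively, a direct expansion gives the identity $(\widehat{\xi}_{\mathrm{2D}}^{(1)})^2(\widehat{\xi}_{\mathrm{2D}}^{(2)})^2 - r_i^2 r_o^2 = \gamma(1-\gamma)(r_o^2 - r_i^2)^2 \geq 0$, which additionally pins down the equality cases $\gamma \in \{0,1\}$ or $r_i = r_o$.

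The main obstacle, which is really only a bookkeeping one, is ensuring the signs line up correctly when combining the $\ln(r_o)$, $\ln(r_i)$, and $\ln(\widehat{\xi})$ terms: the factors $\tfrac{1}{k_H}-\tfrac{1}{k_L}$ appear with opposite signs in the two minima, and a single sign slip would reverse the inequality. Beyond that, the inequality $\widehat{\xi}_{\mathrm{2D}}^{(1)}\widehat{\xi}_{\mathrm{2D}}^{(2)} \geq r_i r_o$ is standard, so the proof amounts to a careful one-line factorization followed by weighted AM--GM.
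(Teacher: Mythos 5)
Your proposal is correct and follows essentially the same route as the paper: the identical factorization of $\Upsilon_{\mathrm{2D,min}}^{(2)} - \Upsilon_{\mathrm{2D,min}}^{(1)}$ into $\bigl(\tfrac{1}{k_L}-\tfrac{1}{k_H}\bigr)\ln\bigl(\widehat{\xi}^{(1)}_{\mathrm{2D}}\widehat{\xi}^{(2)}_{\mathrm{2D}}/(r_i r_o)\bigr)$, followed by the reduction to $\widehat{\xi}^{(1)}_{\mathrm{2D}}\widehat{\xi}^{(2)}_{\mathrm{2D}} \geq r_i r_o$. The only (cosmetic) difference is in that last inequality, where the paper expands the product and applies plain AM--GM to $(r_i/r_o)^2 + (r_o/r_i)^2 \geq 2$, while you use weighted AM--GM (your alternative expansion identity $\gamma(1-\gamma)(r_o^2-r_i^2)^2 \geq 0$ is the paper's computation in disguise, with the bonus of identifying the equality cases).
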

\begin{proof}
  We start by considering the following difference:
  \begin{align}
    \Upsilon_{\mathrm{2D,min}}^{(2)} - \Upsilon_{\mathrm{2D,min}}^{(1)}
    &= \left(\frac{1}{k_L} - \frac{1}{k_H} \right)
    \ln\left(\widehat{\xi}^{(1)}_{\mathrm{2D}} \,
    \widehat{\xi}^{(2)}_{\mathrm{2D}}\right)
    + \left(\frac{1}{k_H} - \frac{1}{k_L}\right)
    \ln(r_i r_o) \notag \\
    &= \left(\frac{1}{k_L} - \frac{1}{k_H} \right)
    \ln\left(\frac{\widehat{\xi}^{(1)}_{\mathrm{2D}}
      \, \widehat{\xi}^{(2)}_{\mathrm{2D}}}{r_i r_o}\right)
  \end{align}
  Since $k_H \geq k_L$, we have $\left(\frac{1}{k_L}
  - \frac{1}{k_H} \right) \geq 0$. Ergo, the task at
  hand is to show $\widehat{\xi}^{(1)}_{\mathrm{2D}} \,
  \widehat{\xi}^{(2)}_{\mathrm{2D}} \geq r_i r_o$ so that
  $\ln\left(\widehat{\xi}^{(1)}_{\mathrm{2D}} \,
  \widehat{\xi}^{(2)}_{\mathrm{2D}} / (r_i r_o)\right) \geq 0$.
  Noting equations
  \eqref{Eqn:TO_porous_definition_of_xi1} and
  \eqref{Eqn:TO_porous_definition_of_xi2}, we
  now proceed as follows:
  \begin{align}
    \label{Eqn:TO_porous_Upsilon_intermediate_inequality}
    \frac{\widehat{\xi}^{(1)}_{\mathrm{2D}} \,
      \widehat{\xi}^{(2)}_{\mathrm{2D}}}{r_i r_o}
    &= \frac{1}{r_i r_o} \sqrt{ \left((1- \gamma) r_i^2 + \gamma r_o^2\right) \left((1- \gamma) r_o^2 + \gamma r_i^2 \right) } \notag \\
    &= \sqrt{ \gamma (1- \gamma) \left(
      \left(\frac{r_i}{r_o}\right)^2 + \left(\frac{r_o}{r_i}\right)^2 \right)
      + (1 - \gamma)^2 + \gamma^2 }
  \end{align}
The arithmetic-geometric mean inequality implies:
\begin{align}
  \left(\frac{r_i}{r_o}\right)^2 + \left(\frac{r_o}{r_i}\right)^2
  \geq 2
\end{align}
Furthermore, noting that $0 \leq \gamma \leq 1$ (which implies $\gamma (1 - \gamma) \geq 0$), equation \eqref{Eqn:TO_porous_Upsilon_intermediate_inequality} will give rise to the following inequality:
\begin{align}
  \frac{\widehat{\xi}^{(1)}_{\mathrm{2D}} \,
    \widehat{\xi}^{(2)}_{\mathrm{2D}}}{r_i r_o}
  \geq \sqrt{ 2 \gamma (1- \gamma) + (1 - \gamma)^2 + \gamma^2 } \equiv 1
\end{align}
which completes the proof.
\end{proof}

On the account of equation \eqref{Eqn:TopOpt_2D_axisym_optim_problem},
the above proposition implies that $\Phi_{\max}^{(1)} \geq \Phi_{\max}^{(2)}$.
Hence, we have shown that the optimal layout of the material for pressure-driven problem with cylindrical symmetry is to place the high permeability material near the inner boundary. The optimal location of the material interface is
\begin{align}
  \label{Eqn:TopOpt_2D_axisym_xiopt}
  \widehat{\xi}_{\mathrm{2D}} = \widehat{\xi}^{(1)}_{\mathrm{2D}}
  = \sqrt{(1 - \gamma) r_{i}^2 + \gamma r_o^2}
\end{align}

\subsection{Velocity-driven}
We now analyze the same two-dimensional domain
comprising two concentric circles but with the
following velocity boundary condition on the
inner boundary:
\begin{align}
  \label{Eqn:Axisym_Cylinder_Vel_BCi}
  &v_r(r = r_i) = v_o
\end{align}
The governing equations will remain as that
of the pressure-driven problem expect for
equation \eqref{Eqn:TO_porous_Cylinder_BCi}
is replaced with the above boundary condition.
The jump conditions at the material interface
(i.e., equations
\eqref{Eqn:TO_porous_Cylinder_jump_pressure} and \eqref{Eqn:TO_porous_Cylinder_jump_velocity})
remain the same.
Equation \eqref{Eqn:TO_porous_Cylinder_Vel}
along with the boundary condition
\eqref{Eqn:Axisym_Cylinder_Vel_BCi} and the
velocity jump condition
\eqref{Eqn:TO_porous_Cylinder_jump_pressure}
imply:
\begin{align}
  v_r(r) = v_o \, \frac{r_i}{r}
\end{align}
The total rate of dissipation is:
\begin{align}
  \Phi(\xi) = \int_{r_i}^{r_o} \frac{\mu}{k} \left(v_r\right)^2 (2 \pi r) dr
  &= 2 \pi \mu v_o^2 r_i^2 \left(\frac{1}{k_1} \int_{r_i}^{\xi^{-}} \frac{1}{r} dr + \frac{1}{k_2} \int_{\xi^{+}}^{r_o} \frac{1}{r} dr\right)
	= 2 \pi \mu v_o^2 r_i^2 \Upsilon_{\mathrm{2D}}(\xi)
\end{align}

For velocity-driven problems, the design problem is
posed as the minimization of the rate of dissipation
with a volume constraint bound on the high-permeability
material. Since, $\Phi(\xi) \propto \Upsilon_{\mathrm{2D}}(\xi)$,
the minimization of $\Phi(\xi)$ is equivalent to the
minimization of $\Upsilon_{\mathrm{2D}}(\xi)$. Recall that we have
minimized $\Upsilon_{\mathrm{2D}}(\xi)$ even for the pressure-driven
problem. (However, in the case of the pressure-driven
problem, $\Phi(\xi) \propto \Upsilon^{-1}_{\mathrm{2D}}(\xi)$ and we
maximized $\Phi(\xi)$.) Thus, the optimal material
distribution for the velocity-driven problem will be
the same as that of the pressure-driven problem---the
high-permeability material should be placed near the inner
boundary. The location of the material interface will be
the same as for the pressure-driven problem (see equation
\eqref{Eqn:TopOpt_2D_axisym_xiopt}).

\subsection{Numerical results}
\begin{table}
	\caption{Parameters used in the numerical simulation of
	the 2D axisymmetric problem---concentric cylinders.
	\label{Fig:TO_porous_cylinder_BVP_parameters}}
	\begin{tabular}{|lr||lr|}\hline
		parameter & value & parameter & value \\ \hline
		$r_i$ & 0.1 & $r_o$ & 1.0 \\
		$p_i$ & 100 & $p_o$ & 1 \\
		$k_L$ & 1 & $k_H$ & 10 \\
		$\gamma$ & 0.3 & $\mu$ & 1 \\ \hline
	\end{tabular}
\end{table}

\begin{figure}[h]
	\subfigure[$\gamma=0.1$]{\includegraphics[scale=0.4]{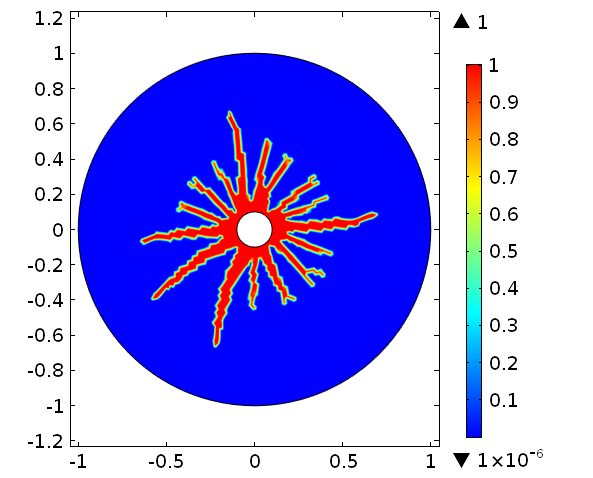}}
	\hspace{0.5 cm}
	\subfigure[$\gamma=0.5$]{\includegraphics[scale=0.4]{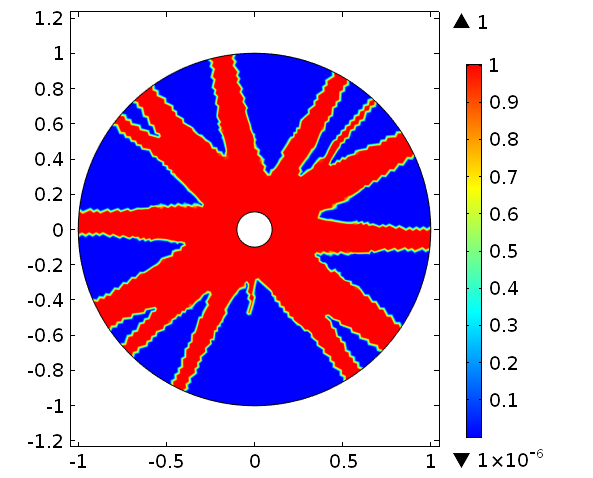}}
	\caption{This figure shows the material distribution under the \emph{Darcy model}, obtained by solving both the ingredients of the design problem---the primal problem and the optimization procedure---numerically. The numerical scheme for the primal analysis did not enforce radial symmetry explicitly. That is, the primal analysis is a two-dimensional finite element analysis using triangular elements rather than axisymmetric finite elements. \label{Conc_cyln1}}
\end{figure}

\begin{figure}[h]
	\subfigure[$\gamma=0.1$]{\includegraphics[scale=0.4]{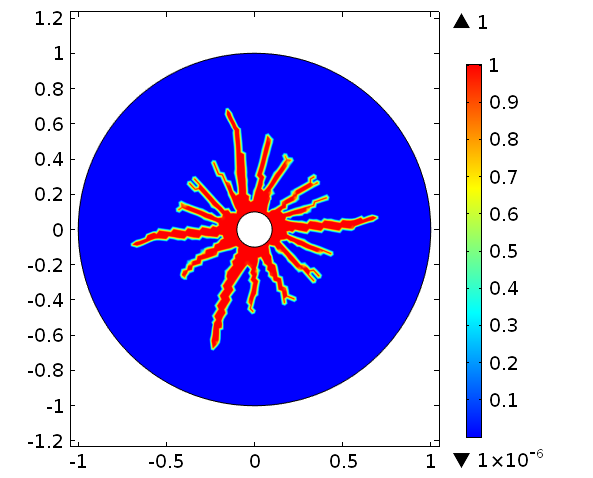}}
	\hspace{0.5cm}
	\subfigure[$\gamma=0.5$]{\includegraphics[scale=0.4]{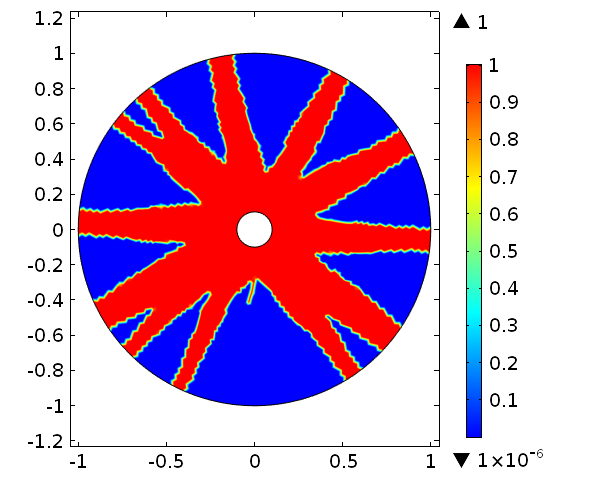}}
        \caption{This figure shows the material distribution under the \emph{Barus model}, obtained by solving both the ingredients of the design problem---the primal problem and the optimization procedure---numerically. The numerical scheme for the primal analysis did not enforce radial symmetry explicitly. That is, the primal analysis is a two-dimensional finite element analysis using triangular elements rather than axisymmetric finite elements. \label{Conc_cyln2}}
\end{figure}

Numerical solutions are obtained for the above design problem; both the primal analysis and the optimization procedure are solved numerically. Since the selection and accuracy of numerical schemes for primal analysis can affect the final design outcome, we compare the designs under two different approaches for the primal analysis. The first approach is a two-dimensional finite element analysis using triangular elements; the underlying radial symmetry is not built into the analysis. The second approach uses axisymmetric finite elements, and hence the radial symmetry is invoked in the primal analysis.

As shown in figures \ref{Conc_cyln1} and \ref{Conc_cyln2}, \textsf{TopOpt} without invoking symmetry yields a material distribution in the form of dendrites or fingers. Despite using regularization (e.g., SIMP) and high-order discretizations, these finger-type design patterns are observed. However, using axisymmetric analysis, we could obtain a discrete 0--1 material distribution with high-permeability material being placed perfectly concentric and near to the inner circle; see figure \ref{Fig:TO_porous_cylinder_axi_material_layout}.

\begin{figure}[h]
  \subfigure[Darcy model]{\includegraphics[scale=0.3]{Figures/Axisymmetric_problems/Concentric_cylinders/1D_Axisymmetric_Perfect.png}}
  \hspace{0.5cm}
  \subfigure[Barus model]{\includegraphics[scale=0.3]{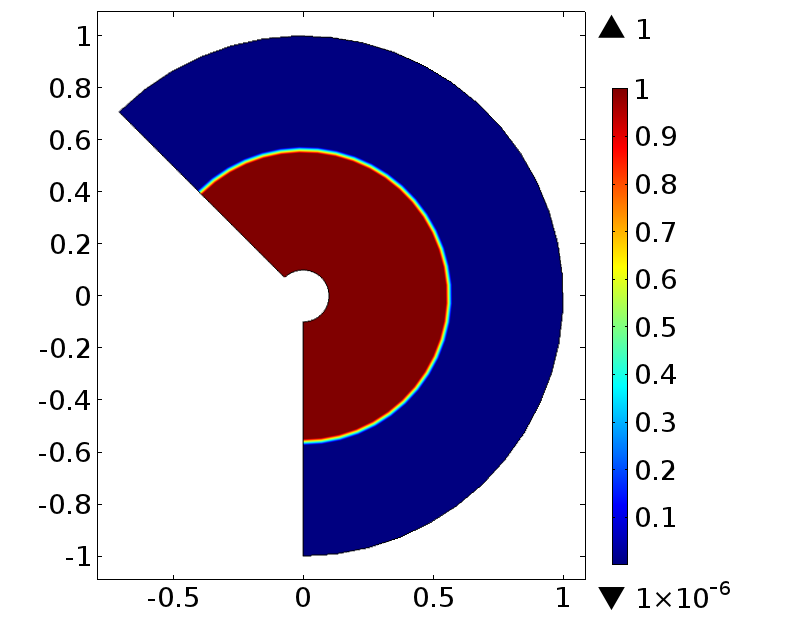}}
  \caption{This figure shows the optimal material distributions for $\gamma$ = 0.3 under the Darcy and Barus models. The primal analysis invoked axisymmetry; that is, axisymmetric finite elements are employed. \label{Fig:TO_porous_cylinder_axi_material_layout}}
\end{figure}

\begin{figure}[h]
  \subfigure[Darcy model]{\includegraphics[scale=0.3]{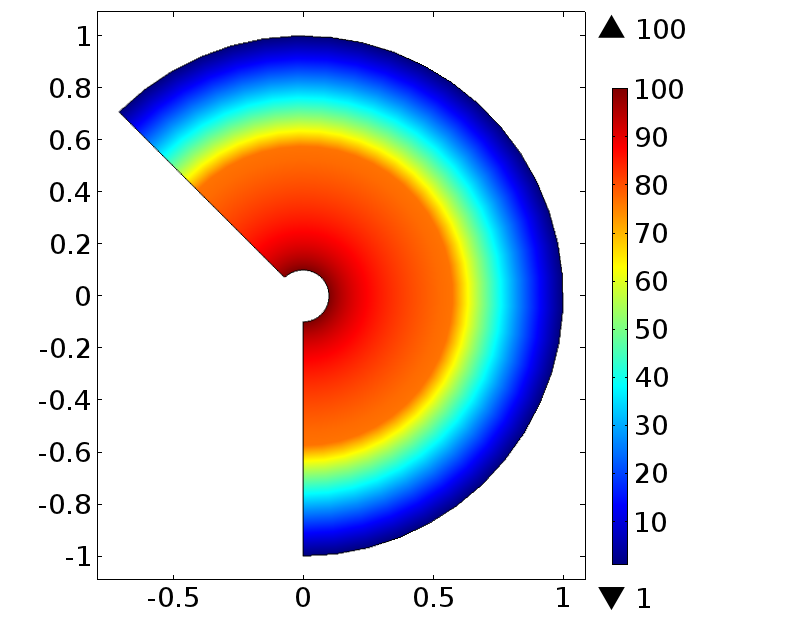}}
  \hspace{0.5cm}
  \subfigure[Barus model]{\includegraphics[scale=0.3]{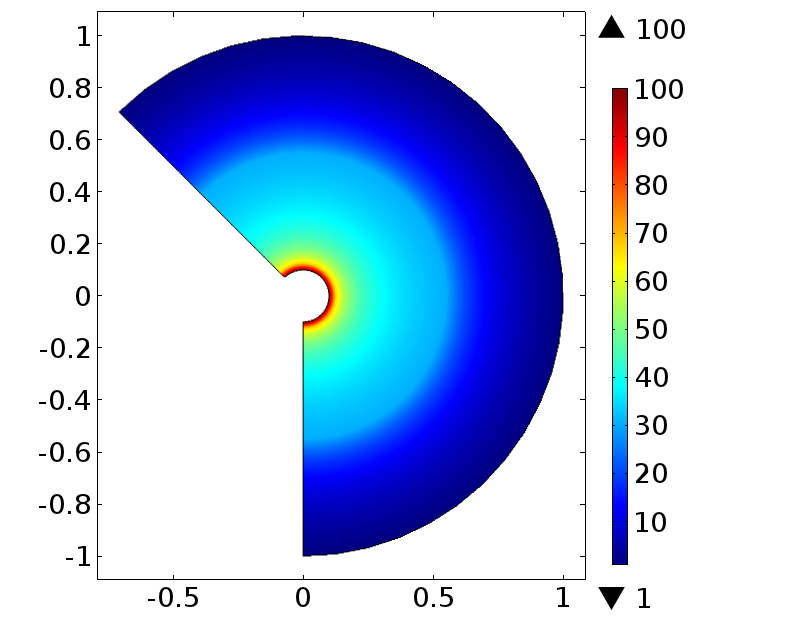}}
  \caption{Pressure distribution for $\gamma$ = 0.3 under different models
  corresponding to the material layout given in figure \ref{Fig:TO_porous_cylinder_axi_material_layout}. \label{Conc_cyln4}}
\end{figure}

\begin{figure}[h]
	\subfigure[Darcy model]{\includegraphics[scale=0.3]{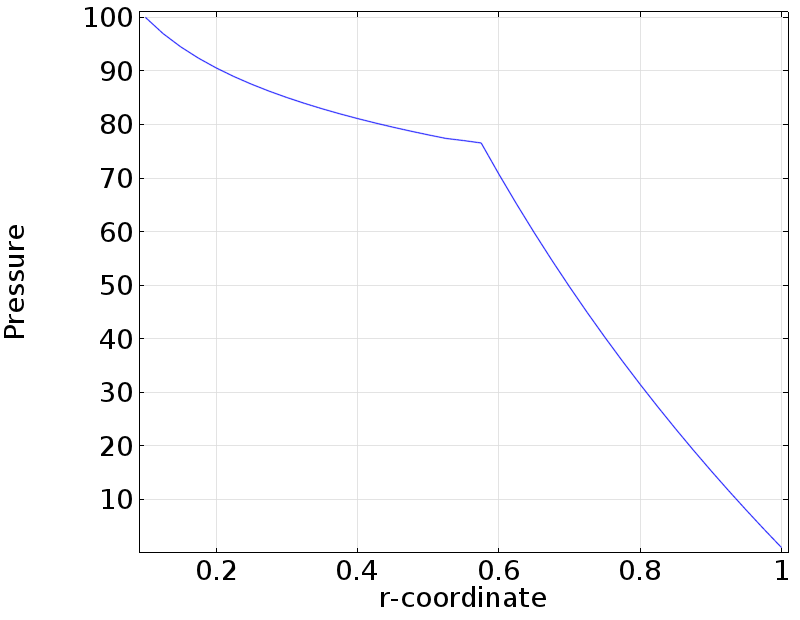}}
	\hspace{0.5cm}
	\subfigure[Barus model]{\includegraphics[scale=0.3]{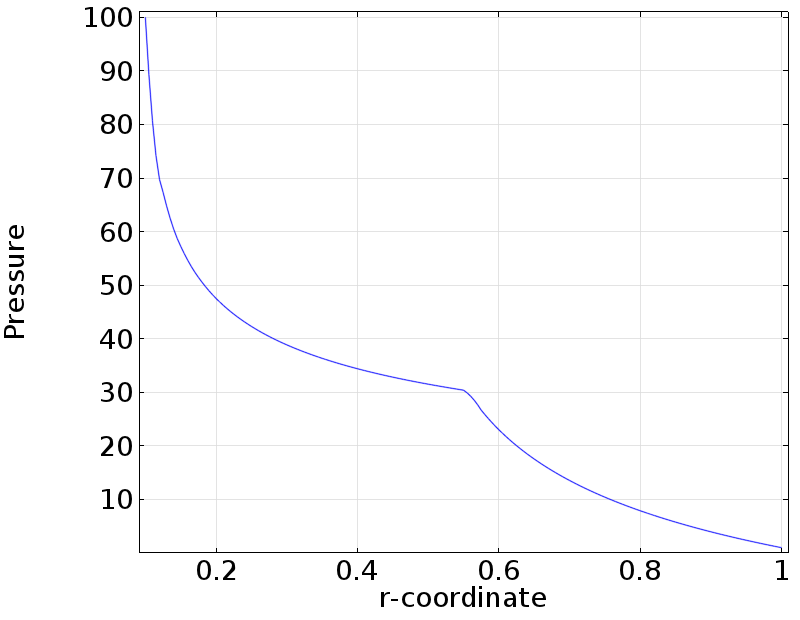}}
	\caption{Graph of pressure versus the radius for $\gamma$ = 0.3 under different models
		corresponding to the material layout given in figure \ref{Fig:TO_porous_cylinder_axi_material_layout}. \label{Conc_cyln5}}
\end{figure}

Several reasons suggest that the dendrite-type solution is a numerical artifact, and hence, \emph{unphysical}:
\begin{enumerate}[(a)]
\item Since the boundary conditions and the geometry are all symmetric, the optimal material layout from the numerical simulation should respect the underlying symmetry. But the dendrite-type solution does not respect the radial symmetry.
\item This numerical solution did not match with the analytical solution, which is derived with reasonable assumptions. 
\item The optimization procedure took over 350 iterations for the dendrite-type solution, while it took less than 25 iterations for the symmetric solution reported in figure \ref{Fig:TO_porous_cylinder_axi_material_layout}.
\item Lastly, such designs with sharp irregular features, even if provided by \textsf{TopOpt}, are often treated as impractical designs and are rejected based on manufacturability; these designs are challenging to fabricate---even with additive manufacturing techniques.
\end{enumerate}

The possibility of finger-type solutions or checkered board patterns under topology optimization is adequately discussed in the literature; for example, see \citep{bruns2007topology,bendsoe_sigmund_2013topology}. Herein, we have shown that such pathological numerical designs for an axisymmetric problem could arise if one does not explicitly enforce the underlying symmetry---by not using axisymmetric finite elements. The other main findings of this section are:
\begin{enumerate}[(i)]
\item The numerical solution using axisymmetric finite elements matches the analytical solution, thereby justifying the single material interface assumption used in the analytical solution.
\item Under an optimal solution for a 2D axisymmetric problem, the high-permeability material should be placed near the inner boundary. This material distribution in this problem is different from that of the 1D problem, where high-permeability material can be placed anywhere.
  \item The non-linearity under the Darcy-Barus model does not seem to affect the optimal material distribution for the chosen problem; however, the pressure distribution within the domain under the Darcy and Darcy-Barus models vastly differs, as shown in figures \ref{Conc_cyln4} and \ref{Conc_cyln5}. Although the inlet and outlet pressures remain the same, the internal pressure within the domain evaluated using the Darcy-Barus model is lower than the corresponding pressure under the Darcy model.
\end{enumerate}

\section{OPTIMAL LAYOUTS FOR A 3D AXISYMMETRIC PROBLEM}
\label{Sec:S7_Porous_AxiSym_3D}

Consider a three-dimensional domain comprising two concentric spheres with an inner radius $r_i$ and an outer radius $r_o > r_i$. A pressure $p_i$ is applied on the inner boundary, and the outer boundary is subject to a pressure $p_o < p_i$. The design question is: \emph{Given two materials with permeabilities $k_1$ and $k_2$, what is the optimal material distribution of these two materials in the domain.} The optimality is judged based on the maximization of the rate of dissipation with a volume constraint bound on the high-permeability material. We will first present the governing equations for the primal analysis. Next, we will obtain an analytical solution for the design problem. Finally, we will obtain numerical optimal solutions using \textsf{TopOpt} and compare them with the analytical solution. 

\subsection{Primal analysis}
For mathematical convenience, we will
use the spherical polar coordinates: 
\begin{align}
  r_i \leq r \leq r_o, \quad
  0 \leq \theta \leq \pi, \quad 
  0 \leq \phi \leq 2\pi 
\end{align}
where $r$ is the radial distance from the center,
and $\theta$ and $\phi$ are the azimuth and polar
angles, respectively. We assume that the body force
is zero: $\mathbf{b}(\mathbf{x}) = \mathbf{0}$. The
permeability is assumed to vary only in the radial
direction. That is, 
\begin{align}
  k(\mathbf{x}) = k(r) 
\end{align}
Noting the inherent symmetry in the
problem, the velocity and pressure
fields can be written as follows:
\begin{align}
  \mathbf{v}(\mathbf{x}) = v_{r}(r) \, \hat{\mathbf{e}}_{r}
  \quad \mathrm{and} \quad
  p(\mathbf{x}) = p(r) 
\end{align}
where $v_{r}$ is radial component of the velocity,
and $\hat{\mathbf{e}}_r$ is the unit vector along
the radial direction. The Darcy model is used to
describe the flow. 

\subsubsection{Non-dimensionalization}
We take the following (pressure, length
and viscosity) as reference quantities:
\begin{align}
  p_\mathrm{ref} = p_i - p_o \;
  [\mathrm{M}^{1} \mathrm{L}^{-1} \mathrm{T}^{-2}], \quad 
  L_\mathrm{ref} = r_o \; 
  [\mathrm{M}^{0}\mathrm{L}^{1}\mathrm{T}^{0}] 
  \quad \mathrm{and} \quad 
  \mu_\mathrm{ref} = \mu \; 
     [\mathrm{M}^{1} \mathrm{L}^{-1} \mathrm{T}^{-1}]
\end{align}
These references quantities give rise to the
following non-dimensional quantities:
\begin{align}
  \bar{r} = \frac{r}{r_o}, \bar{r}_i = \frac{r_i}{r_o}, \bar{r}_o = 1, \; \bar{k} = \frac{k}{r^2_o}, \; 
  \bar{p} = \frac{p - p_o}{p_i - p_o}, \; \bar{p}_i = 1, \; \bar{p}_o = 0, \; 
  \bar{\mu} = 1, \; \bar{v}_{r} = \frac{v_{r} \mu}{r_o (p_i - p_o)} 
\end{align}
The governing equations, in non-dimensional form, 
for the primal analysis can be written as follows:
\begin{subequations}
  \begin{alignat}{2}
    \label{Eqn:TO_porous_Sphere_Darcy}
    &\frac{1}{\bar{k}(\bar{r})} \bar{v}_r
    + \frac{d \bar{p}}{d \bar{r}} = 0
    &&\quad \forall \bar{r} \in (\bar{r}_i, 1) \\
    \label{Eqn:TO_porous_Sphere_Continuity}
    &\frac{1}{\bar{r}^2} \frac{d (\bar{r}^2 \bar{v}_r)}{d \bar{r}} = 0
    &&\quad \forall \bar{r} \in (\bar{r}_i, 1) \\
    \label{Eqn:TO_porous_Sphere_BCi}
    &\bar{p}(\bar{r} = \bar{r}_i) = 1 \\
    \label{Eqn:TO_porous_Sphere_BCo}
    &\bar{p}(\bar{r} = 1) = 0
	\end{alignat}
\end{subequations}
From hereon we will drop the superposed overlines for
convenience; all the quantities in the rest of this
section should be considered as non-dimensional. 

\subsection{Analytical solution for the design problem}
We make the same assumption as we made for
the 2D axisymmetric problem (\S\ref{Sec:S6_Porous_AxiSym_2D}): 
\emph{each material is present in a contiguous and symmetric manner, and a single boundary exists between the two materials over the entire domain.} This assumption allows us to write the spatial dependence of the permeability as follows: 
\begin{align}
  k(r) = \left\{\begin{array}{ll}
  k_1 & r_i < r < \xi \\
  k_2 & \xi < r < 1
  \end{array} \right.
\end{align}
where $r = \xi$ is the location of the material
interface. The jump conditions along the interface
take the following form:
\begin{subequations}
  \begin{align}
    \label{Eqn:TopOpt_3D_velocity_jump}
    &v_r(r = \xi^{-}) = v_r(r = \xi^{+}) \\
    \label{Eqn:TopOpt_3D_pressure_jump}
    &p(r = \xi^{-}) = p(r = \xi^{+})
  \end{align}
\end{subequations}

Using equations \eqref{Eqn:TO_porous_Sphere_Continuity}
and \eqref{Eqn:TopOpt_3D_velocity_jump} and noting that
$r$ $\neq 0$, the velocity field can be written as follows: 
\begin{align}
  \label{Eqn:TO_porous_Sphere_Velocity}
  v(r) = \frac{A}{r^2} 
\end{align}
where $A$ is a constant to be determined. Using
equations \eqref{Eqn:TO_porous_Sphere_Velocity}
and \eqref{Eqn:TO_porous_Sphere_Darcy}, the
solution for the pressure field is:
\begin{align}
  p(r) = \left\{ \begin{array}{cc} 
    \frac{1}{k_1 r} A + B_1 & \\ \\
    \frac{1}{k_2 r} A + B_2 &
    \end{array} \right.
\end{align}
where $B_1$ and $B_2$ are constants. The pressure
boundary conditions \eqref{Eqn:TO_porous_Sphere_BCi}
and \eqref{Eqn:TO_porous_Sphere_BCo} and the jump
condition \eqref{Eqn:TopOpt_3D_pressure_jump} will
establish:
\begin{align}
  A = \left[\frac{1}{k_1 r_i} - \frac{1}{k_2}
    + \frac{1}{\xi} \left(\frac{1}{k_2} - \frac{1}{k_1}
    \right)\right]^{-1}, \quad 
  B_1 = 1 - \frac{A}{k_1 r_i}
  \quad \mathrm{and} \quad 
  B_2 = -\frac{A}{k_2} 
\end{align}
Thus, the final expression for the velocity field is:
\begin{align}
  v_r(r) = \Upsilon_{\mathrm{3D}}(\xi) \frac{1}{r^2} =
  \left[\frac{1}{k_1 r_i} - \frac{1}{k_2}
    + \frac{1}{\xi} \left(\frac{1}{k_2} - \frac{1}{k_1}
    \right)\right]^{-1} \frac{1}{r^2}
\end{align}
Consequently, the total rate of dissipation
takes the following form:
{\small
\begin{align}
  \Phi(\xi) = \int_{r_i}^{1} \frac{4\pi r^2}{k(r)} \; v_r^2 \; dr 
  & = 4 \pi A^2 \left(\int_{r_i}^{\xi} \frac{1}{k_1} \, \frac{1}{r^2} \; dr
  + \int_{\xi}^{1} \frac{1}{k_2} \, \frac{1}{r^2} \; dr \right)
  = 4 \pi \Upsilon_{\mathrm{3D}}(\xi)
\end{align}
}

Under the single material interface, the design
problem -- maximizing the rate of dissipation with
a volume constraint bound on the high-permeability
material -- reduces to answering the following
question:
\begin{quote}
  \emph{Which of the two cases---placing the
    high-permeability material near the inner
    boundary or near the outer boundary---gives
    the higher rate of dissipation?}
\end{quote}
To answer this question, we will now explore these
two cases separately. For further analysis, we will
denote the high and low permeabilities by $k_H$ and
$k_L$, respectively, with  $k_L \leq k_H$. We denote
the optimal location of the material interface by
$\widehat{\xi}$. 

\emph{Case 1: High-permeability material
  near the inner boundary.} Under this case, $k_1 = k_H$
and $k_2 = k_L$. The corresponding rate of dissipation is:
\begin{align}
  \Phi^{(1)} = 4 \pi \left[ \frac{1}{k_H r_i}
    - \frac{1}{k_L} + \frac{1}{\xi}
    \left(\frac{1}{k_L} - \frac{1}{k_H} \right) 
    \right]^{-1}
\end{align}
The volumetric constraint takes the following form: 
\begin{align}
  \frac{4 \pi}{3} (\xi^3 - r_i^3) \;  \leq \;
  \gamma \frac{4 \pi}{3} (1 - r_i^3)
\end{align}
which implies that 
\begin{align}
  \xi^3 \leq \gamma + (1 - \gamma) r_i^3
\end{align}
(Recall that $\gamma$ is the bound in the volume
constraint for the high-permeability material.)
Since $k_{L} \leq k_{H}$, $\Phi^{(1)}$ increases
monotonically with $\xi$. Thus, for this case,
the maximum rate of dissipation occurs for 
\begin{align}
  \label{Eqn:TO_porous_Sphere_gamma_in}
  \widehat{\xi} = \widehat{\xi}_{\mathrm{3D}}^{(1)}
  := \sqrt[3] { \gamma + (1 - \gamma) r_i^3}
\end{align}
with the corresponding value: 
\begin{align}
  \label{Eqn:3Daxi_Phi_1_max}
  \Phi^{(1)}_{\mathrm{max}} = 4 \pi \left[ \frac{1}{k_H r_i}
    - \frac{1}{k_L} + \frac{1}{\widehat{\xi}^{(1)}_{\mathrm{3D}}}
    \left(\frac{1}{k_L} - \frac{1}{k_H} \right) 
    \right]^{-1}
\end{align}

\emph{Case 2: High-permeability material
  near the outer boundary.}
Under this case, $k_1 = k_L$ and $k_2 = k_H$. The
corresponding rate of dissipation is:
\begin{align}
  \Phi^{(2)} = 4 \pi \left[ \frac{1}{k_L r_i}
    - \frac{1}{k_H} + \frac{1}{\xi}
    \left(\frac{1}{k_H} - \frac{1}{k_L} \right) 
    \right]^{-1}
\end{align}
The volumetric constraint takes the following form: 
\begin{align}
  \frac{4 \pi}{3} (r_o^3 - \xi^3)
  \;  \leq \;  \gamma \frac{4 \pi}{3} (1 - r_i^3)
\end{align}
which implies that 
\begin{align}
  (1 - \gamma) + \gamma r_i^3
  \leq \xi^3 
\end{align}
Using that $k_L \leq k_H$, we conclude that
$\Phi^{(2)}$ decreases monotonically with $\xi$.
Thus, the maximum rate of dissipation occurs for 
\begin{align}
  \label{Eqn:TO_porous_Sphere_gamma_out}
  \widehat{\xi} = \widehat{\xi}_{\mathrm{3D}}^{(2)}
  := \sqrt[3] {(1 - \gamma) + \gamma r_i^3}
\end{align}
with the corresponding value: 
\begin{align}
  \label{Eqn:3Daxi_Phi_2_max}
  \Phi^{(2)}_{\mathrm{max}} = 4 \pi \left[ \frac{1}{k_L r_i}
    - \frac{1}{k_H} + \frac{1}{\widehat{\xi}^{(2)}_{\mathrm{3D}}}
    \left(\frac{1}{k_H} - \frac{1}{k_L} \right) 
    \right]^{-1}
\end{align}

To answer the question of which of these two
cases furnishes the optimal solution, we will
use equations \eqref{Eqn:3Daxi_Phi_1_max}
and \eqref{Eqn:3Daxi_Phi_2_max} and consider
the following difference:
\begin{align}
  \label{Eqn:TO_porous_Phi_intermediate_inequality}
  \frac{1}{\Phi^{(1)}_{\mathrm{max}}} -
  \frac{1}{\Phi^{(2)}_{\mathrm{max}}}  
  &= - \frac{1}{4 \pi} \left(\frac{1}{k_L} - \frac{1}{k_H}\right)
  \left(1 + \frac{1}{r_i} -
  \left( \frac{1}{\widehat{\xi}_{\mathrm{3D}}^{(1)}}
  + \frac{1}{\widehat{\xi}_{\mathrm{3D}}^{(2)}} \right) \right)
\end{align}
Since $k_H \geq k_L$, Lemma
\ref{Lemma:Lemma_on_xi_stars}
implies that
\begin{align}
  \frac{1}{\Phi^{(1)}_{\mathrm{max}}} -
  \frac{1}{\Phi^{(2)}_{\mathrm{max}}}
  \leq 0 
\end{align}
This father implies that 
\begin{align}
  \Phi^{(1)}_{\mathrm{max}} \geq \Phi^{(2)}_{\mathrm{max}}
  \quad \forall \gamma \in [0,1]
\end{align}
Thus, the optimal solution is the placement of
high permeability material near the inner boundary
satisfying the volume constraint while the rest of
the domain is occupied by the low permeability
material. For a given volume constraint bound
$\gamma$, the optimal location of the material
interface is 
\begin{align}
  \widehat{\xi}_{\mathrm{3D}} = \widehat{\xi}_{\mathrm{3D}}^{(1)}
  = \sqrt[3]{\gamma + (1 - \gamma)r_i^3}
\end{align}

\subsection{Numerical designs using TopOpt}
To validate the \emph{single material interface} assumption, we obtained a numerical solution for the design problem. Again, the primal analysis as well as the optimization procedure are solved numerically. We enforced the underlying radial symmetry into the primal analysis by using two-dimensional triangular axisymmetric finite elements.
Figure \ref{2D_Axisymmetric} shows the numerical
optimal material layouts from \textsf{TopOpt}
using the input parameters given in Table
\ref{Fig:TO_porous_sphere_BVP_parameters}.
Similar to the 2D axisymmetric analysis
(\S\ref{Sec:S6_Porous_AxiSym_2D}) and in agreement
with the analytical solution, the high-permeability
material (represented by `red') is placed near the
inner boundary while the low-permeability material
(represented by `blue') is placed near the outer
boundary. This numerical solution also justifies
the use of the single interface assumption in
obtaining the analytical solution.

\begin{table}[h]
  \caption{Parameters used in the numerical simulation
    of the 3D axisymmetric problem---concentric spheres.
    \label{Fig:TO_porous_sphere_BVP_parameters}}
  \begin{tabular}{|lr||lr|}\hline
		parameter & value & parameter & value \\ \hline
		$r_i$ & 0.1 & $r_o$ & 1.0 \\
		$p_i$ & 100 & $p_o$ & 1 \\
		$k_L$ & 1 & $k_H$ & 10 \\
		$\gamma$ & 0.1 & $\mu$ & 1 \\ \hline
	\end{tabular}
\end{table}

\begin{figure}[h]
  \subfigure[Material distribution]{\includegraphics[scale=0.4]{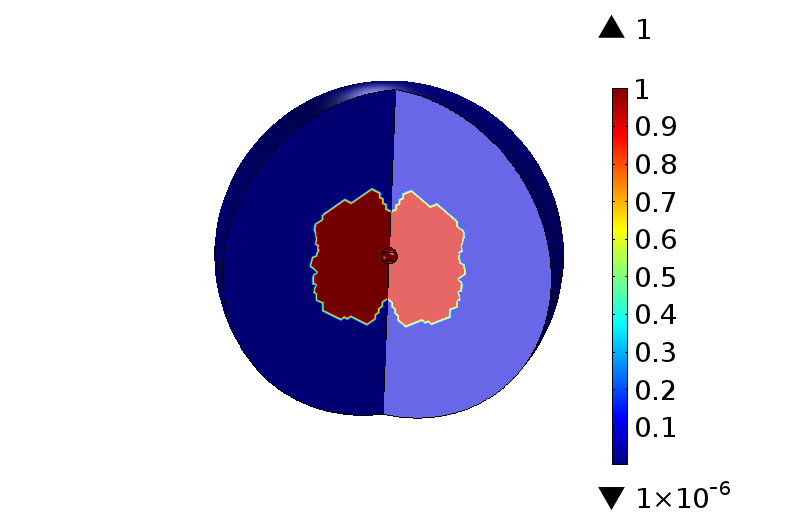}}
  %
  \subfigure[Pressure distribution]{\includegraphics[scale=0.4]{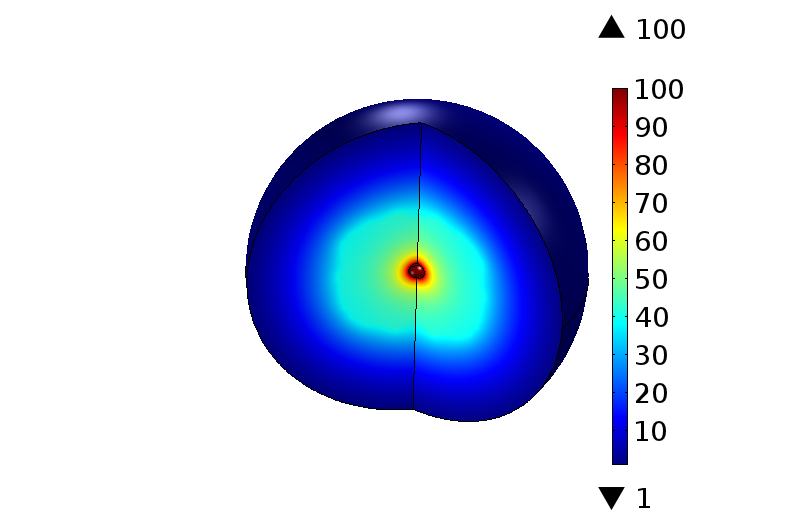}}
  \caption{The top figure shows the material
    layout obtained using \textsf{TopOpt} for
    the 3D axisymmetric problem under the Darcy
    model. The bottom figure shows the corresponding
    pressure distribution in the domain.
    \label{2D_Axisymmetric}}
\end{figure}

\section{EFFECT OF PRESSURE-DEPENDENT VISCOSITY ON OPTIMAL LAYOUTS}
\label{Sec:S8_Porous_Barus}
The aim of this section is to compare optimal solutions under the Darcy and linearized Barus models. Specifically, we study how $\beta_B$ and aspect ratio of the domain affect the optimal material layouts and the resulting velocity and pressure profiles. To see significant differences, we will allow for non-zero volumetric source. Then the continuity equation \eqref{Eqn:TopOpt_GE_Continuity} should be altered as follows:
\begin{align}
  \mathrm{div}[\mathbf{v}] = Q \quad \mathrm{in} \; \Omega
\end{align}
where $Q \; [\mathrm{M}^{0}\mathrm{L}^{0}\mathrm{T}^{-1}]$ is the strength of the volumetric source; a positive value for $Q$ means source while a negative value means sink.

We will consider two porous materials with different permeabilities---material 1 with high permeability ($k_H$) and material 2 with low permeability ($k_L$). We will find the optimal layout of these two materials within the domain by maximizing the total mechanical dissipation rate with a bound on the volume fraction of the high-permeability material. Table \ref{Fig:Pressure-driven_BVP_parameters} provides the values of the parameters used in the numerical simulations.

\begin{table}
  \caption{Parameters used in the numerical simulation
    for the pressure-driven and pipe-bend problems.
    \label{Fig:Pressure-driven_BVP_parameters}}
	\begin{tabular}{|lr||lr|}\hline
		\textbf{parameter} & \textbf{value} & \textbf{parameter} & \textbf{value} \\ \hline
		Inlet pressure, $p_i$ & 100 & Outlet pressure, $p_o$ & 1 \\
		High permeability, $k_H$ & 1 & Low permeability, $k_L$ & 0.1 \\
		Volumetric constraint, $\gamma$ & 0.1 & Fluid viscosity, $\mu$ & 1 \\
		Volumetric source, Q & 0, 1 &
		Body force, $\rho \mathbf{b}$ & $\mathbf{0}$ \\ \hline
	\end{tabular}
\end{table}

  The numerical simulation's main inputs are the geometry, the boundary conditions, properties of the two constituent materials, and fluid properties (e.g., Barus coefficient). By placing an inequality constraint on the usage of the control material using the volume bound constraint, $\gamma$, we seek the optimal layout of the material distribution for the specified inputs. Topology optimization for a specific objective function (maximization of the rate of dissipation in this case) is carried out using the homogenization method with control variable, $\xi$, where $0 \leq \xi \leq 1$.

\subsection{Pressure-driven problem} Consider a 2D computational domain of dimensions $2 \times 1.5$. (The problem is presented in a non-dimensional form.) A segment of the left boundary, $\Gamma^p$, is subject to a pressure boundary condition, $p_i=100$, while a segment of the right face is subject to a lower pressure boundary condition, $p_o=1$. The rest of the boundary, $\Gamma^v$, is subject to homogeneous velocity boundary condition (i.e., $\mathbf{v}(\mathbf{x}) \cdot \widehat{\mathbf{n}}(\mathbf{x}) = 0$ on $\Gamma^{v}$). The computational domain along with the boundary conditions are shown in figure \ref{Fig:Rect_domain2}.

Figures \ref{Fig:Pressure_driven_zero_Q} and \ref{Fig:Pressure_driven_nonzero_Q} show the results (i.e., material distribution, and the corresponding solution fields) for zero volumetric source ($Q = 0$) and non-zero volumetric source ($Q = 1$), respectively, for various values of the Barus coefficient $\beta_B$, including $\beta_B = 0$, which corresponds to the Darcy model. The main observations for the \emph{zero} volumetric source case are:
\begin{enumerate} 
\item The optimal material distribution is unaffected by the value of the Barus coefficient. Specifically, the high-permeability material is placed along the flow path of the fluid from inlet to the outlet, irrespective of the value of the Barus coefficient.
\item However, the corresponding magnitude of the velocity and the spatial distribution of the pressure differ significantly.
\end{enumerate}
For the case of \emph{non-zero} volumetric source case,
the main observations are:
\begin{enumerate}
\item The optimal material distribution varies with the Barus coefficient.
\item The Barus model yields a much different material layout than the Darcy model. In contrast to the Darcy model, the material distribution under the Barus model does not necessarily follow the flow path.
\item Notably, the qualitative nature of the material distribution under the Barus model could change from problem to problem and for different values of the Barus coefficients; this change is because of the nonlinearity in the Barus model. Figures \ref{Fig:Nonzero_Q_DarcyExplanation} and \ref{Fig:Nonzero_Q_BarusExplanation} support this claim.
  \begin{enumerate}
  \item Since viscosity and permeability are independent of the solution fields under the Darcy model, the magnitude of the velocity is proportional to the magnitude of the pressure. So, the high-permeability material under the Darcy model is placed in the regions with more significant pressure gradients. See figure \ref{Fig:Nonzero_Q_DarcyExplanation}.
\item For the same problem under the Barus model, the high-permeability material is placed in the regions with high-pressure gradients for smaller values of $\beta_B$. But for a higher value of $\beta_B$, the high-permeability material is placed in the regions with smaller pressure gradients (cf. figure \ref{Fig:Nonzero_Q_BarusExplanation}). It is, therefore, not possible to predict the optimal material layout with actually solving the design problem, as the layout depends on pressure (via pressure-dependent viscosity) as well as the pressure gradient. In general, the relative strengths---of pressure and pressure gradient---will not be known \emph{a priori}. 
  \end{enumerate}
\end{enumerate}

\begin{figure}[h]
	\includegraphics[scale=0.65]{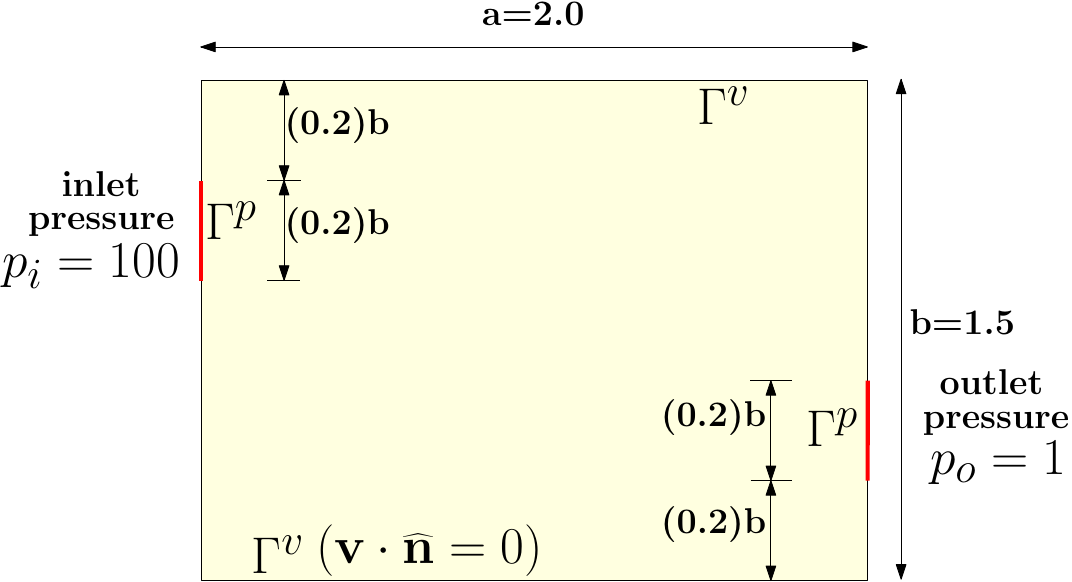}
	\caption{\textsf{Pressure-driven problem:} The flow of a fluid in a 2D rectangular porous domain (2.0 x 1.5) is driven by pressure boundary conditions. An inlet pressure $p_i = 100$ is applied on a part of the left face, and an outlet pressure $p_o = 1$ is applied on a part of the right face. On the rest of the boundary, the normal component of the velocity is prescribed to be zero. A uniform volumetric source $Q$ is applied over the entire domain. \label{Fig:Rect_domain2}}
\end{figure}

\begin{figure}[h]
  \subfigure{\includegraphics[scale=0.25]{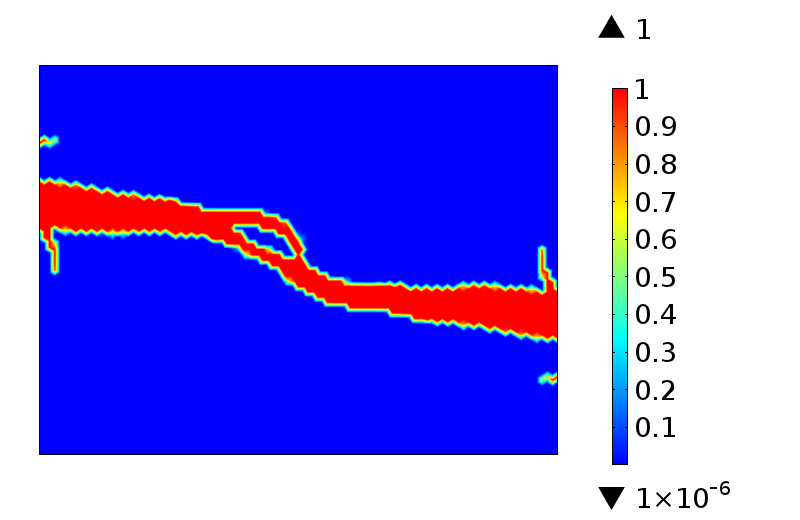}}
  \subfigure{\includegraphics[scale=0.25]{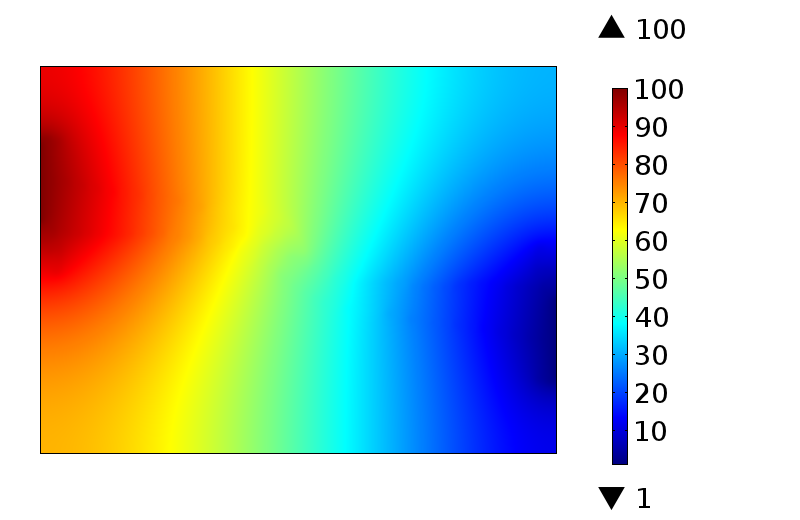}}
  \subfigure{\includegraphics[scale=0.25]{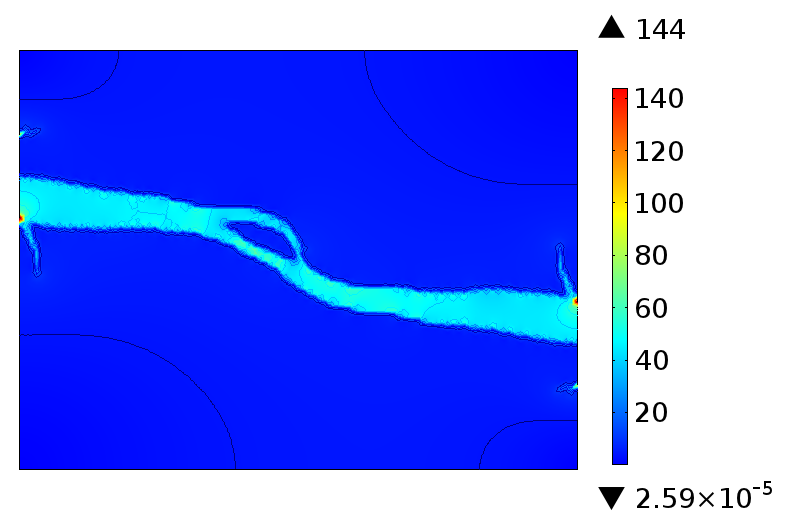}}
  
  {\small (a) Darcy model $\beta_B = 0$.}
  
  \subfigure{\includegraphics[scale=0.25]{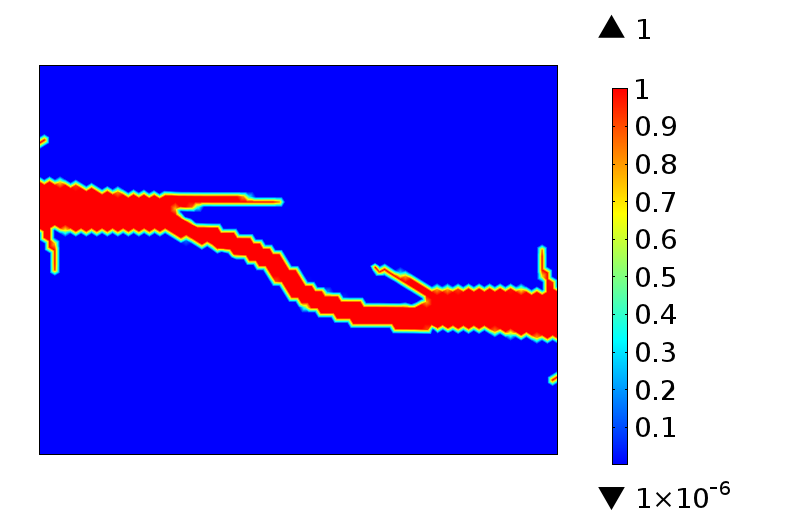}}
  \subfigure{\includegraphics[scale=0.25]{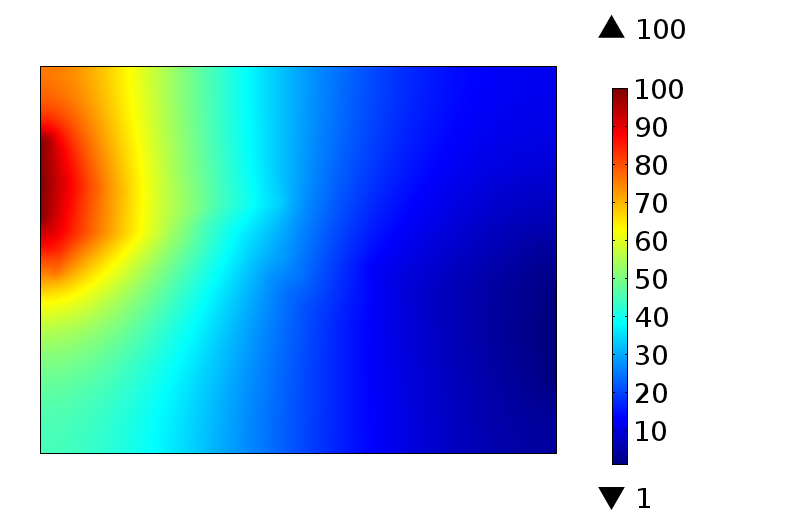}}
  \subfigure{\includegraphics[scale=0.25]{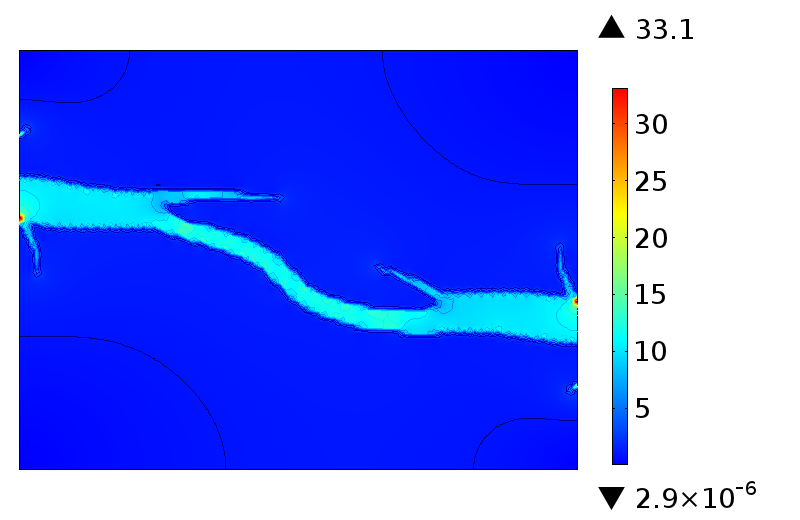}}
  
  {\small (b) Linearized Barus with $\beta_B = 0.1$.}
  
  \subfigure{\includegraphics[scale=0.25]{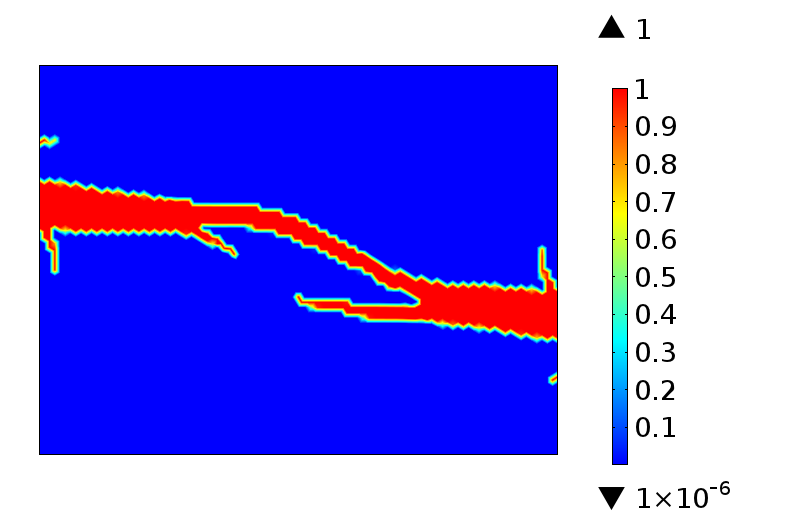}}
  \subfigure{\includegraphics[scale=0.25]{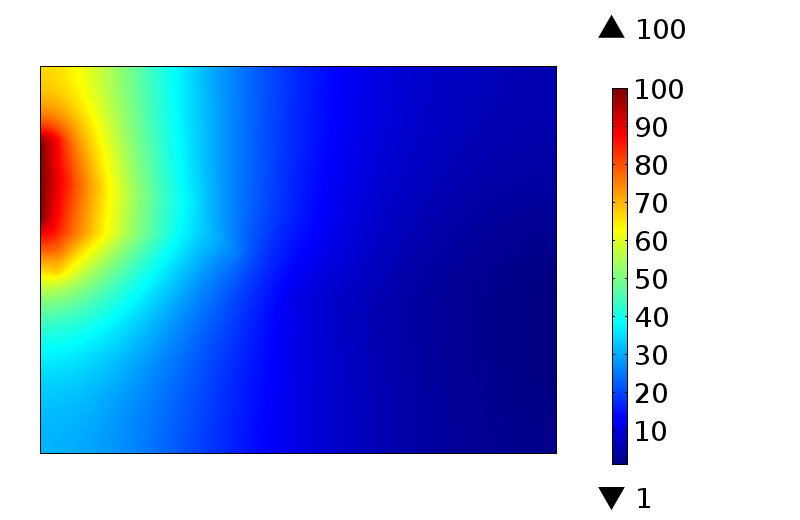}}
  \subfigure{\includegraphics[scale=0.25]{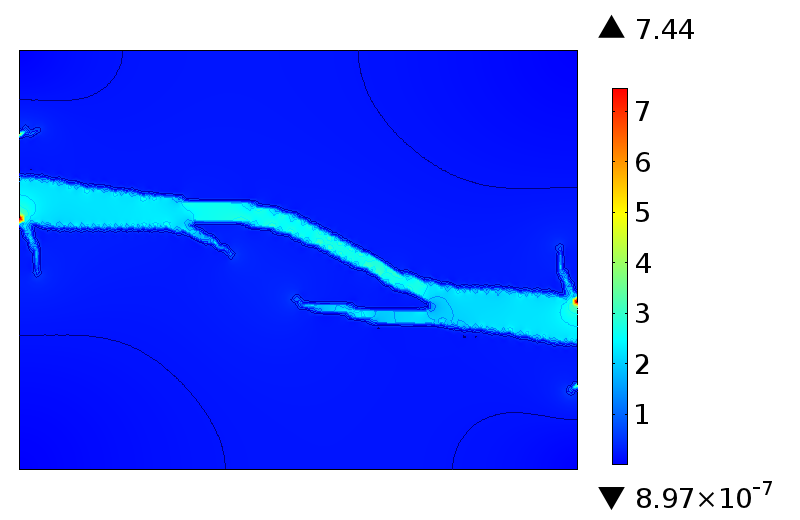}}
  
  {\small (c) Linearized Barus with $\beta_B = 0.75$.}
  
  \caption{\textsf{Pressure-driven problem with zero volumetric source:}
    This figure shows the optimized material distribution (left panel) and the corresponding pressure (middle) and velocity (right) profiles for various values of the Barus coefficient $\beta_B$. A volumetric bound constraint is placed on the high permeability material with $\gamma = 0.1$. In the material distribution plots, the high permeability material regions are represented in `red' color while `blue' color regions represent the low permeability. (See the online version for the color figure.) Three notable observations are: (i) The optimal material distribution remains almost the same for different Barus coefficient values. (ii) The magnitude of the velocity decreases with an increase in the Barus coefficient. (iii) The pressure spreads out lesser with an increase in the Barus coefficient. \label{Fig:Pressure_driven_zero_Q}}
\end{figure}

\begin{figure}[h]
	\subfigure{\includegraphics[scale=0.35]{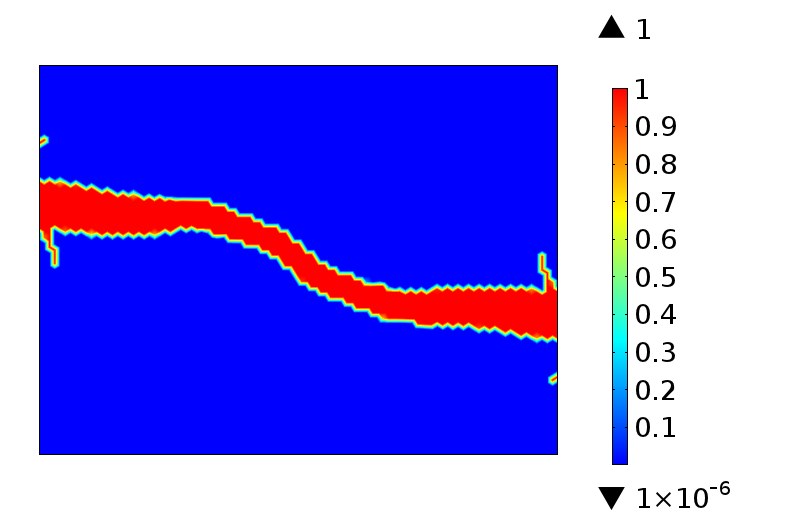}}
	\subfigure{\includegraphics[scale=0.35]{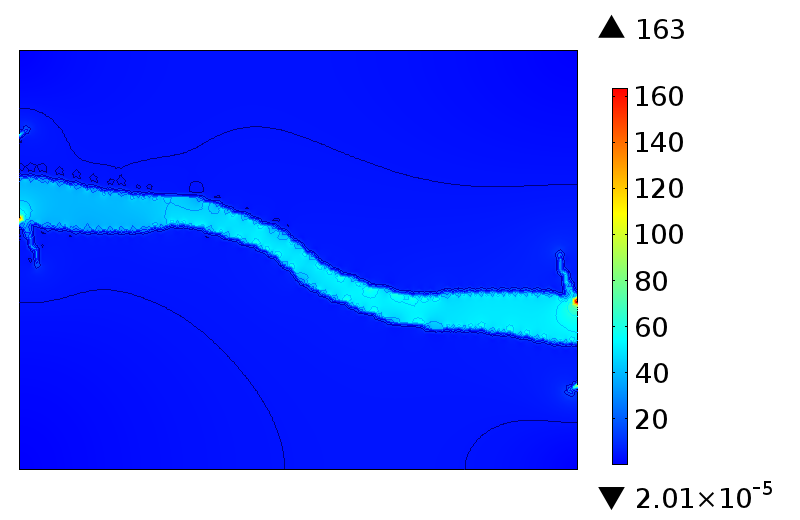}}

	{\small (a) Darcy model $\beta_B = 0$.}

	\subfigure{\includegraphics[scale=0.35]{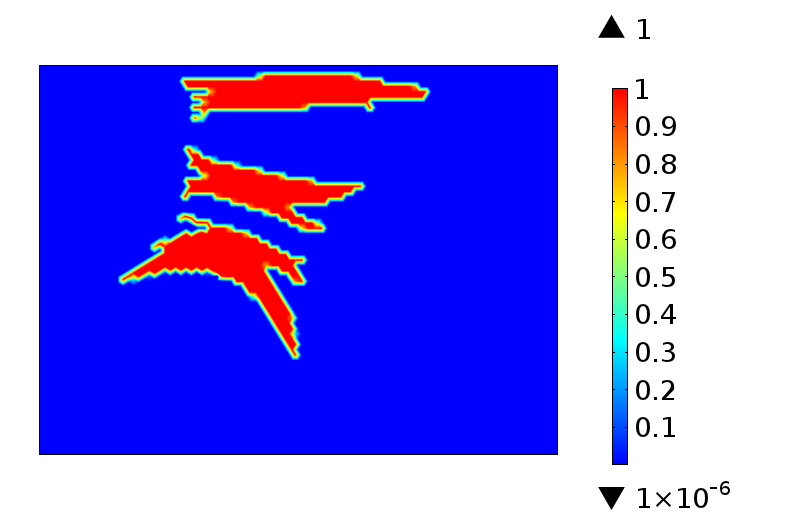}}
	\subfigure{\includegraphics[scale=0.35]{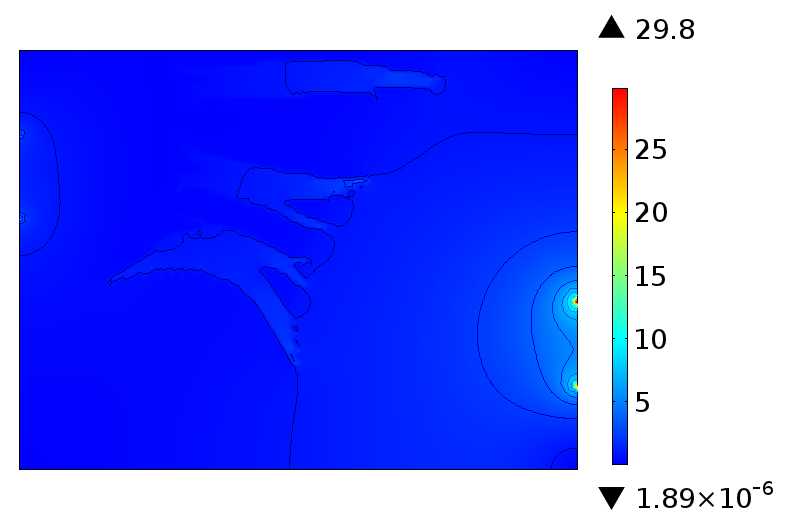}}

	{\small (b) Linearized Barus with $\beta_B = 0.125$.}

	\subfigure{\includegraphics[scale=0.35]{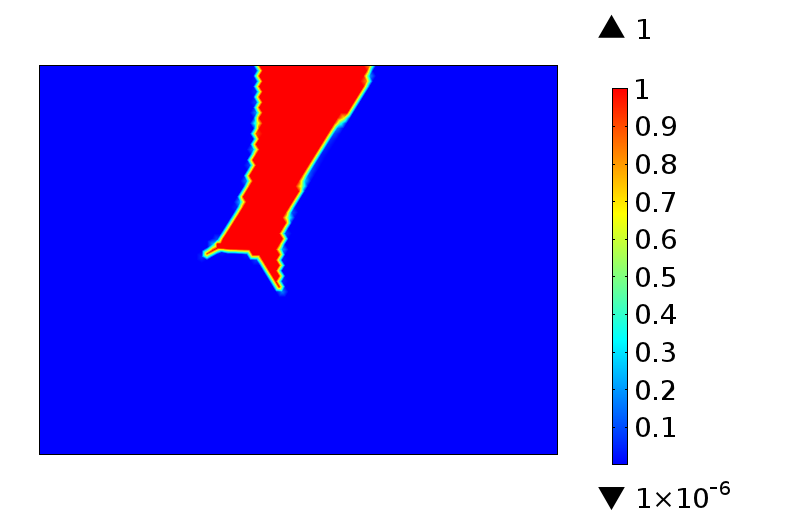}}
	\subfigure{\includegraphics[scale=0.35]{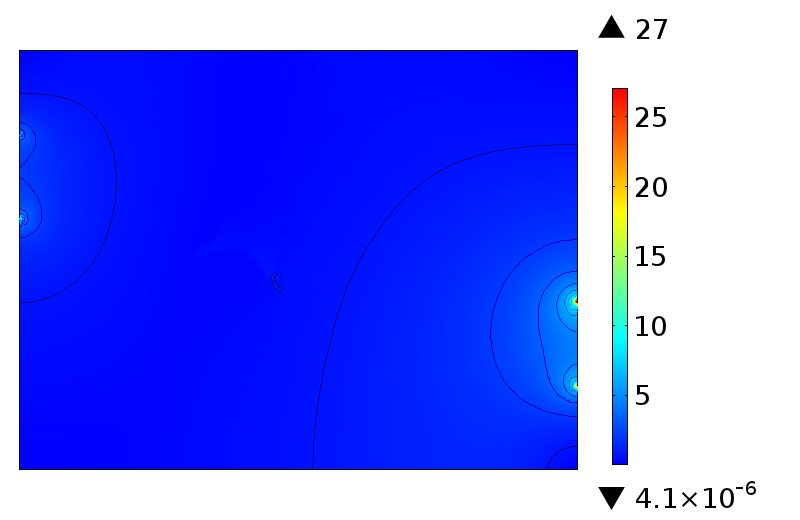}}

	{\small (c) Linearized Barus with $\beta_B = 0.25$.}

	\caption{\textsf{Pressure-driven problem with non-zero volumetric source:}
          This figure shows the optimal material distribution (left panel) 
	  and the corresponding velocity profile (right) for various values
          of the Barus coefficient $\beta_B$. The regions with high permeability
          material are shown in `red' while `blue' represents the low permeability
          material regions. (See the online version for the color figure.) The total
          area occupied by the high permeability material is limited to $\gamma = 0.1$.
          The value for the uniform volumetric source is $Q = 1$. Two notable differences
          compared to the case of zero volumetric source are (cf. figure \ref{Fig:Pressure_driven_zero_Q}):
          (i) The optimal material layout varies with the Barus coefficient.
          (ii) The material distribution under the linearized Barus model is
          not necessarily along the flow path. In comparison, under the Darcy
          model, the high permeability material is placed along the flow path
          of the fluid from inlet to the outlet.\label{Fig:Pressure_driven_nonzero_Q}}
\end{figure}

\begin{figure}[h]
	\subfigure[\small Material layout]{\includegraphics[scale=0.35]{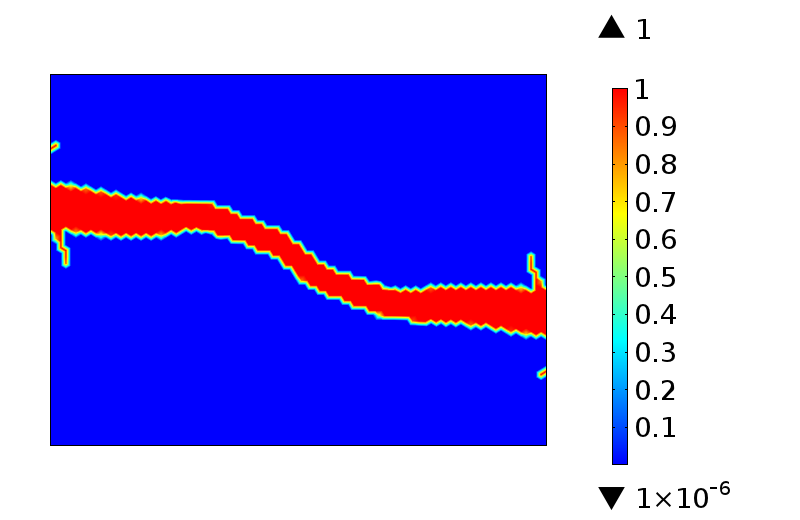}}
	\subfigure[Pressure profile]{\includegraphics[scale=0.35]{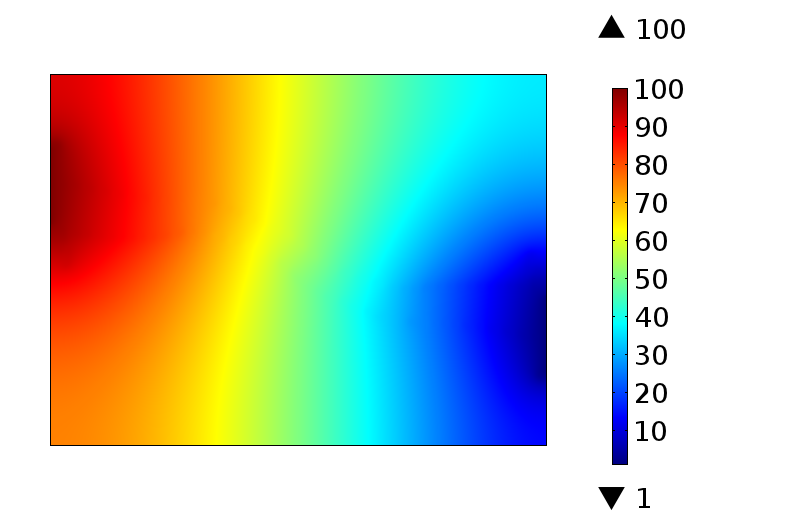}}
	\subfigure[Pressure gradient]{\includegraphics[scale=0.3]{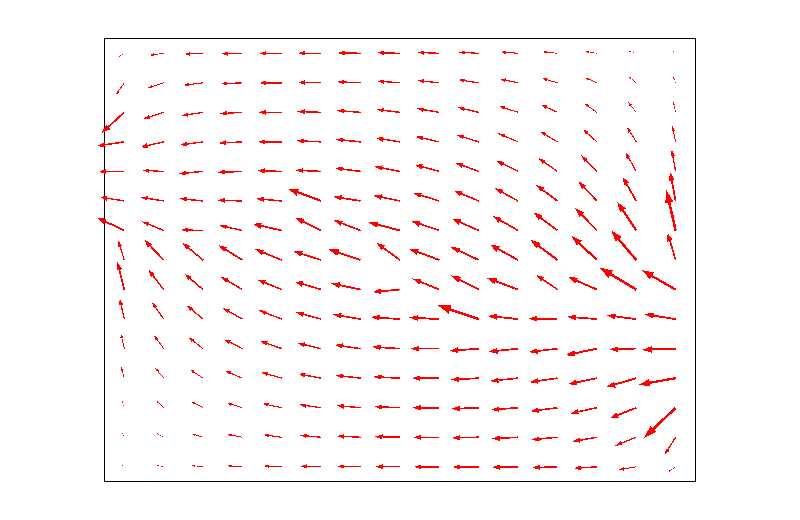}}
	\hspace{1 cm}
	\subfigure[Velocity profile]{\includegraphics[scale=0.35]{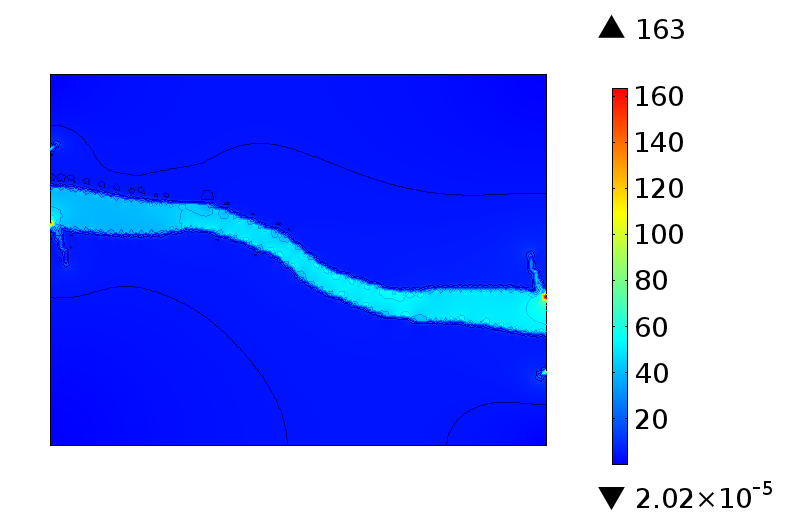}}
	\caption{\textsf{Pressure-driven problem with non-zero volumetric source (Darcy model):}
		This figure shows the optimal material distribution and the corresponding pressure profile, pressure gradients and velocity profile in the domain. The simulation parameters are: $\beta_B = 0$, $\gamma = 0.1$ and $Q = 1$. For the Darcy model, high permeability material (in 'red' color) is placed along high velocity and pressure gradients. (See the online version for color.) \label{Fig:Nonzero_Q_DarcyExplanation}}
\end{figure}

\begin{figure}[h]
  \subfigure[\small Material layout]{\includegraphics[scale=0.35]{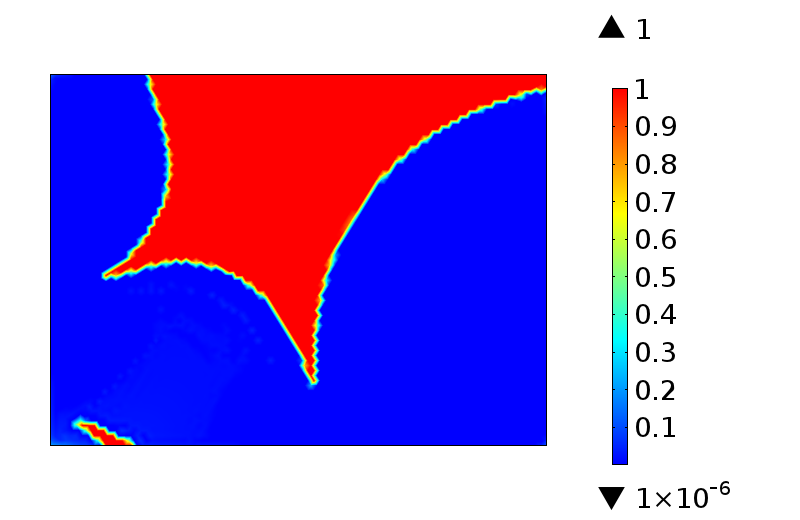}}
  \subfigure[Pressure profile]{\includegraphics[scale=0.35]{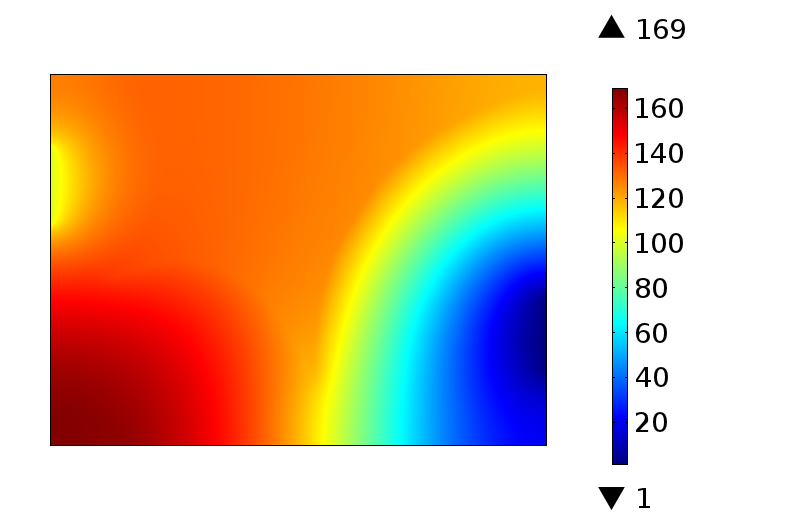}}
   \subfigure[Pressure gradient]{\includegraphics[scale=0.3]{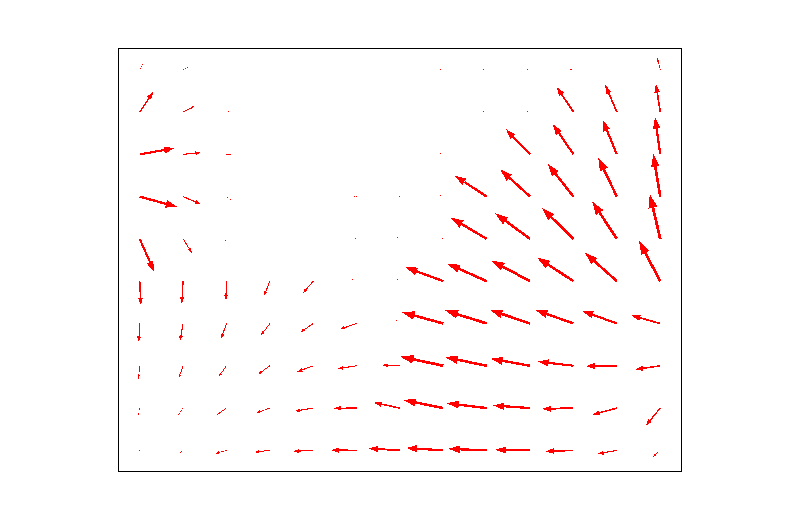}}
   	\hspace{1 cm}
  \subfigure[Velocity profile]{\includegraphics[scale=0.35]{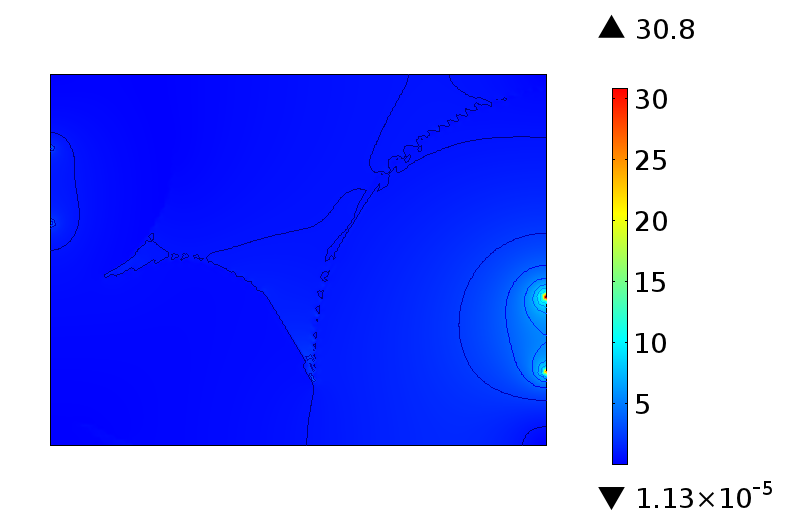}}
  \caption{\textsf{Pressure-driven problem with non-zero volumetric source (Darcy-Barus model):}
    This figure shows the optimal material distribution and the corresponding pressure profile, pressure gradients and velocity profile in the domain. The simulation parameters are: $\beta_B = 0.125$, $\gamma = 0.3$ and $Q = 1$. Under the optimal material layout, the regions with low pressure gradients have high-permeability material, indicated in `red' on the left panel. (See the online version for color.) \label{Fig:Nonzero_Q_BarusExplanation}}
\end{figure}


\subsection{Pipe-bend problem}
In order to ensure the above-mentioned trend is not specific to the previous problem, we will consider another canonical problem. Figure \ref{Fig:Pipe_bend_description} provides a pictorial description of this new problem. The difference between the previous and this problem lies in the location of the outlet (cf. figures \ref{Fig:Rect_domain2} and \ref{Fig:Pipe_bend_description}).

Figure \ref{Fig:Pipe_bend_sq_domain_Dist} shows the material distributions and associated velocity profiles for various values of the Barus coefficient. We also use this new problem to study the effect of aspect ratio on the optimal material layouts by solving the problem on two configurations -- a square domain ($a = b = 1.0$) and a rectangular domain ($a = 2.0$ and $b = 1.5$); see figure \ref{Fig:Pipe_bend_sq_vs_rect_Dist}. The following are the main observations:
\begin{enumerate}
\item Just like the pressure-driven problem, the optimal material distribution under the pipe-bend problem differs as the value of the Barus coefficient changes.
\item Under the Darcy model, the high-permeability material is placed in such a way as to provide maximum connectivity between the inlet and outlet. Specifically, the high-permeability is placed along the flow path, resulting in the form of a bent conduit from the inlet to the outlet. However, this trend is not observed under the linearized Barus model. Thus, due to the model's inherent non-linearity, one cannot guess the material placement under the (linearized) Barus model.
  \item Similar to the trend observed under the pressure-driven problem, the magnitude of the velocity decreases with an increase in the Barus coefficient.
  \item Under the Darcy model, keeping all the simulation parameters the same, \textsf{TopOpt} places material along the flow path for both square and rectangular domains. But the same is not true for the linearized Barus model. Because of the non-linearity, the material layout for a square domain cannot be extrapolated to a rectangular domain for the Barus model. Said differently, a na\"ive scaling will not work for material designs involving nonlinear models. 
\end{enumerate}

\begin{figure}[h]
  \includegraphics[scale=0.65]{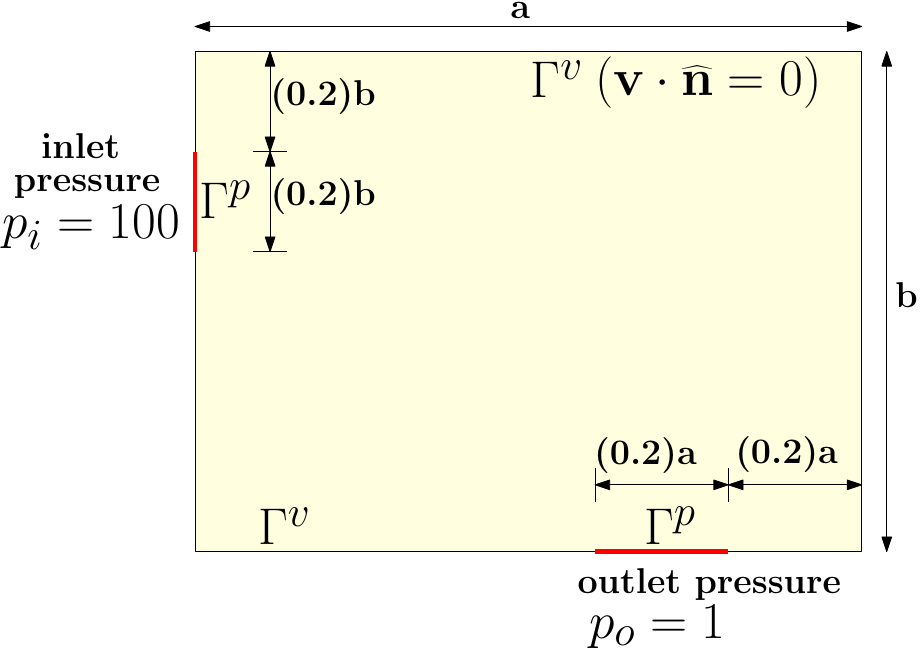}
  \caption{A square domain ($a = b = 1.0$) and a rectangular domain ($a = 2.0 \; \mathrm{and} \; b = 1.5$) are subjected to pressure boundary conditions with one inlet on left face and one outlet on bottom face. An inlet pressure $p_i = 100$ is applied on the left face of $\Gamma^{p}$ and an outlet pressure $p_o = 1$ is applied on the bottom face of $\Gamma^p$. At all other points on the boundary, the normal component of the velocity is prescribed to be zero. A uniform volumetric source $Q=1$ has been applied over the entire domain. \label{Fig:Pipe_bend_description}}
\end{figure}

 \begin{figure}[h]
 	\subfigure{\includegraphics[scale=0.35]{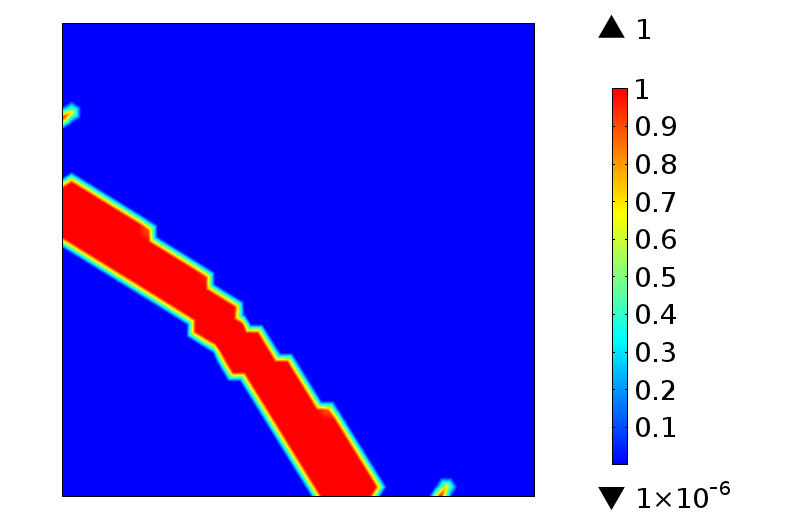}}
 	\subfigure{\includegraphics[scale=0.35]{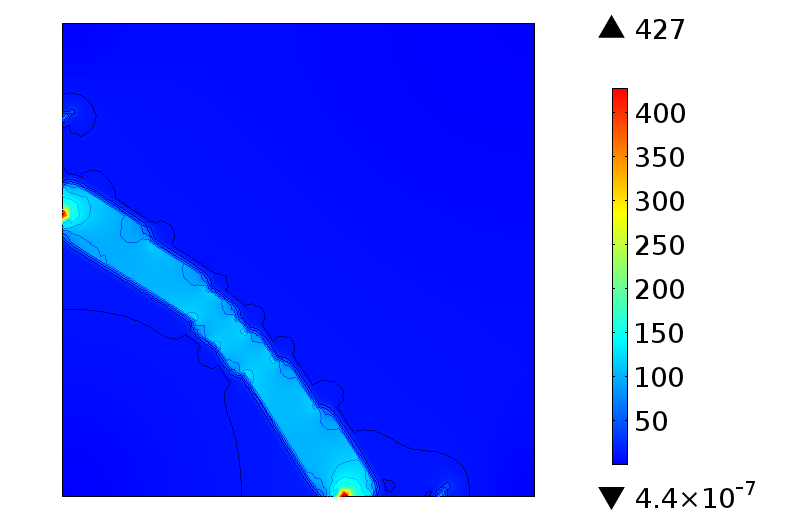}}

 	{\small (a) Darcy Model ($\beta_B$ = 0).}

 	\subfigure{\includegraphics[scale=0.35]{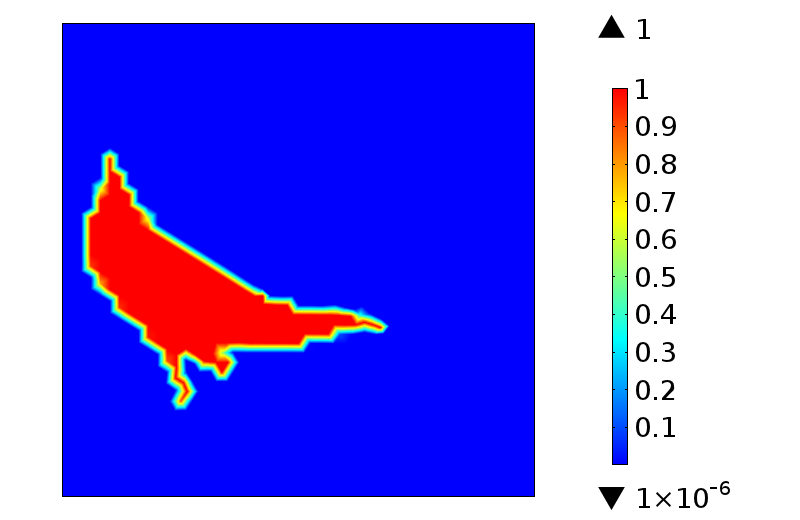}}
 	\subfigure{\includegraphics[scale=0.35]{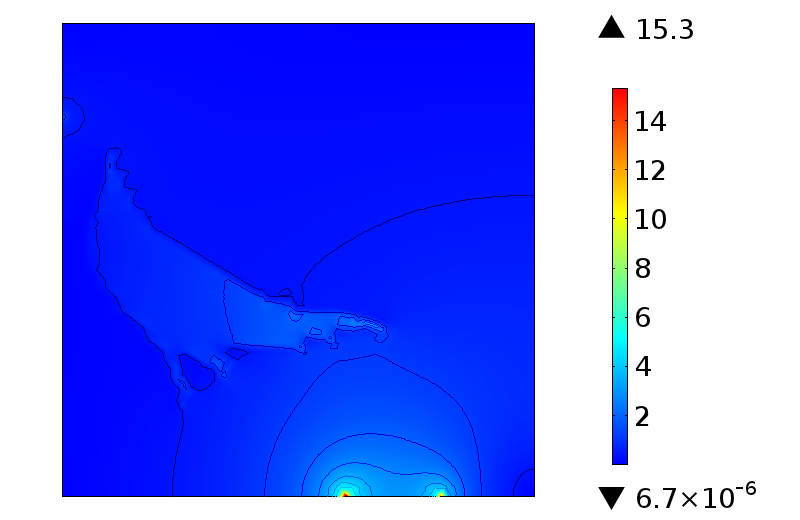}}

 	{\small (b) Linearized Barus with $\beta_B = 0.5$.}

 	\subfigure{\includegraphics[scale=0.35]{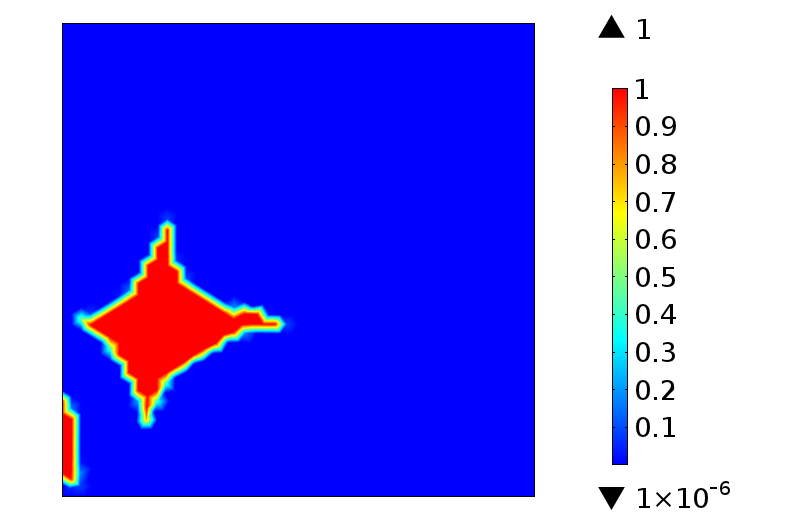}}
 	\subfigure{\includegraphics[scale=0.35]{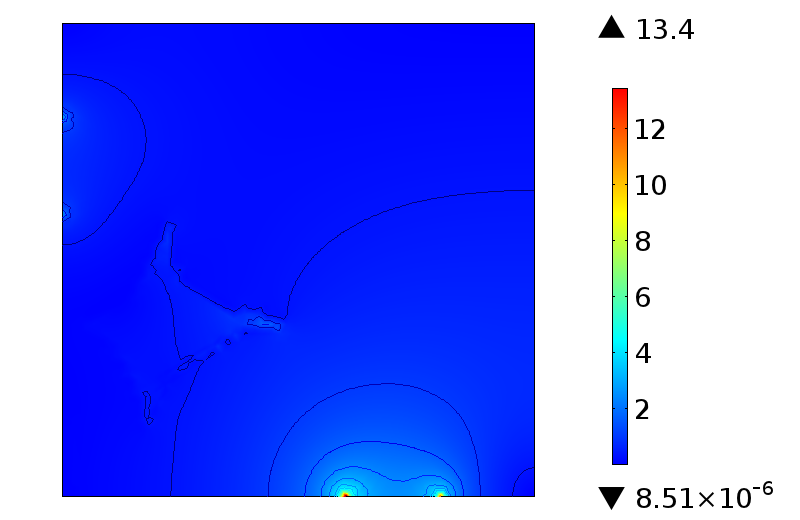}}

 	{\small (c) Linearized Barus with $\beta_B = 0.75$.}

 	\caption{\textsf{Pipe-bend problem (square domain):} This figure shows the optimal material distributions and the corresponding velocity profiles for various values of the Barus coefficient $\beta_B$. The computational domain is a square ($a = b = 1.0$). A uniform volumetric source $Q = 1$ is applied. The volumetric bound constraint of $\gamma = 0.1$ is placed on the high-permeability material. The regions occupied by the high-permeability material is represented by `red' while `blue' represents the low-permeability material. The main observations are: (i) just like the pressure-driven problem, the optimal material distribution varies with the Barus coefficient. (ii) The placement of the high-permeability material does not follow the flow path under the linearized Barus model (i.e., for non-zero $\beta_B$ values). (iii) The magnitude of the velocity decreases with an increase in the Barus coefficient.\label{Fig:Pipe_bend_sq_domain_Dist}}
 \end{figure}

\begin{figure}[h]
  \subfigure{\includegraphics[scale=0.32]{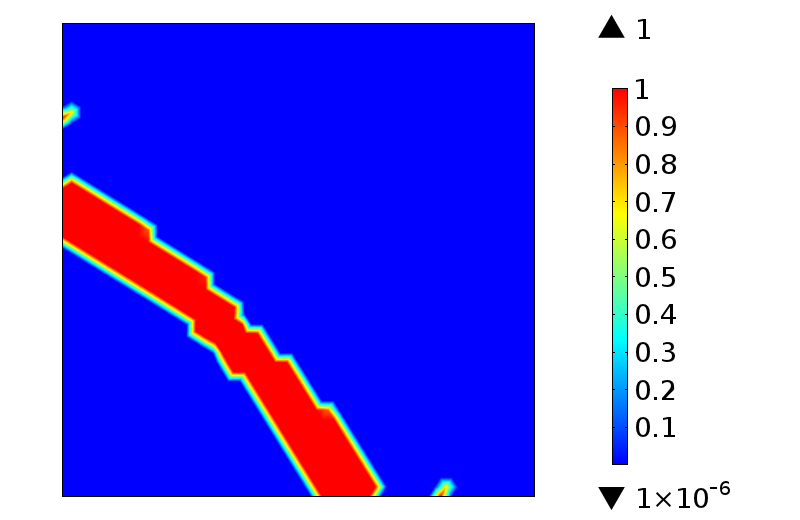}}
  \subfigure{\includegraphics[scale=0.35]{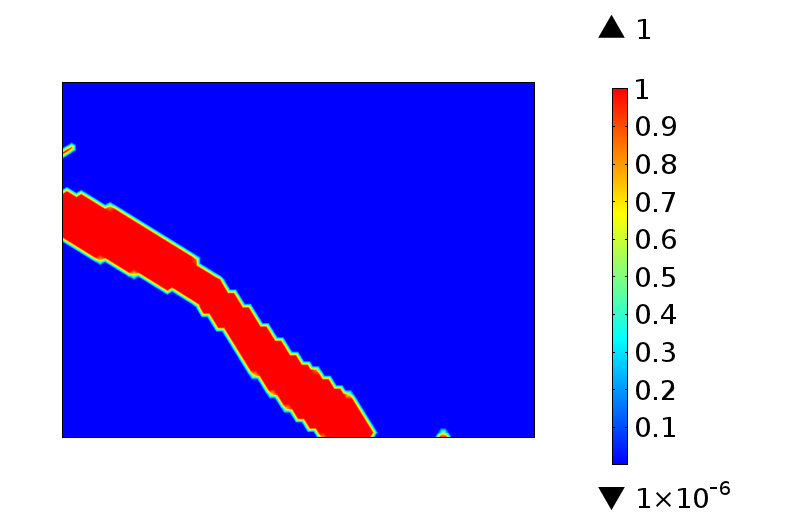}}
  
  {\small (a) Darcy model $\beta_B = 0$.}
  
  \subfigure{\includegraphics[scale=0.32]{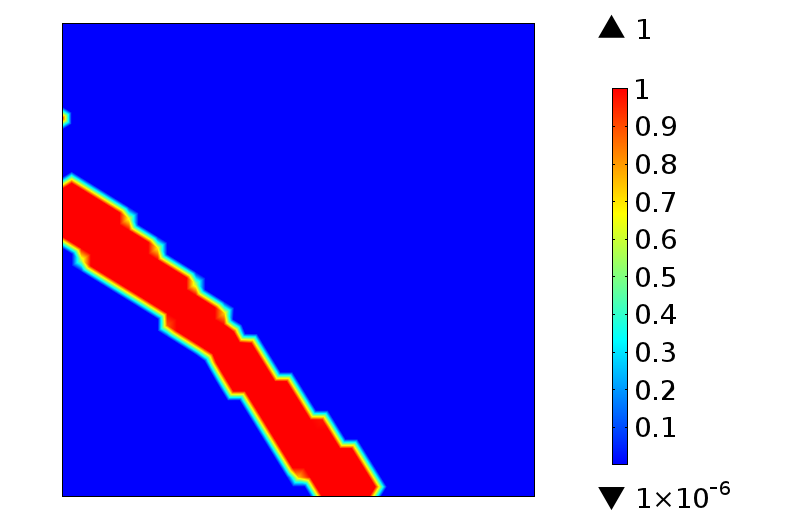}}
  \subfigure{\includegraphics[scale=0.35]{./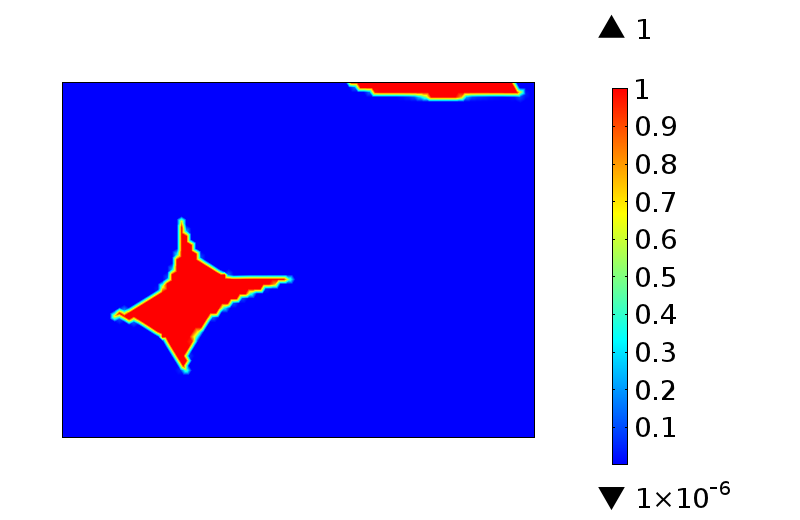}}
  
  {\small (b) Linearized Barus, $\beta_B$ = 0.25.}
  
  \subfigure{\includegraphics[scale=0.32]{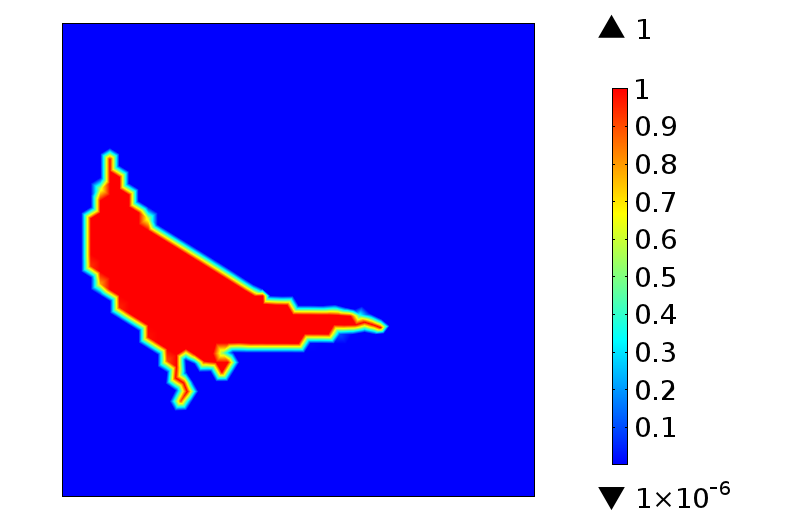}}
  \subfigure{\includegraphics[scale=0.35]{./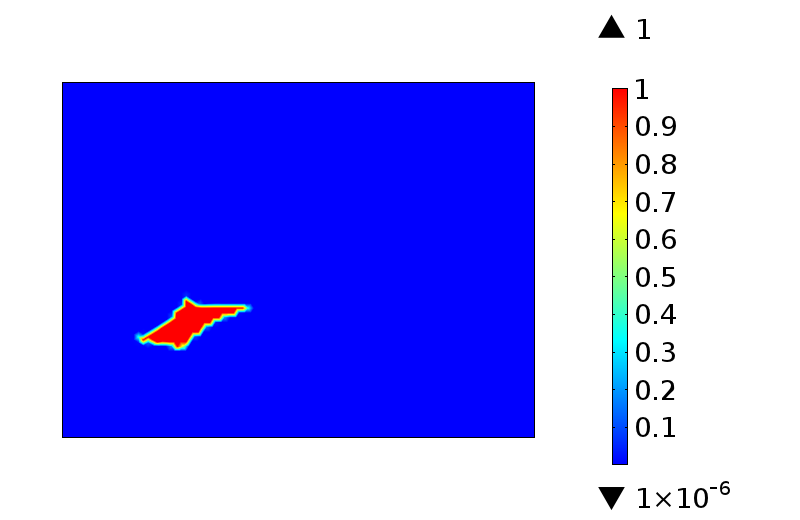}}
  
  {\small (c) Linearized Barus, $\beta_B$ = 0.5.}
  
  \caption{\textsf{Pipe-bend problem (square vs. rectangular domains):} This figure compares the optimal material layouts for a square domain (left panel) with that of a rectangular domain (right). All the other simulation parameters are the same. High permeability material is represented by `red' while `blue' represents the low permeability material. Under the Darcy model, the optimal material distributions are similar for both square and rectangular domains; the placement of the high-permeability material follows the flow path. But the same is not true for the linearized Barus model. Because of the non-linearity, the material layout for a square domain cannot be extrapolated to a rectangular domain for the Barus model.\label{Fig:Pipe_bend_sq_vs_rect_Dist}}
\end{figure}

\section{CLOSURE}
\label{Sec:S9_Porous_CR}
In this paper we have addressed obtaining optimal material layouts using topology optimization for flow through porous media applications. Some principal contributions of this article are: 
\begin{enumerate}[(C1)]
\item We have considered nonlinear models for flow through porous media that capture the pressure-dependent viscosity and inertial effects, thereby expanding the application space.
\item We proposed a well-posed design problem under \textsf{TopOpt} using the mechanical dissipation rate---a physical quantity with strong mechanics and thermodynamic underpinning. 
\item Although often used in the literature under \textsf{TopOpt}, the minimum power theorem has limited scope. Notably, we have shown that this theorem does not hold for nonlinear models describing the flow of fluids in porous media.
\item We have derived analytical solutions for 1D problems, and 2D and 3D problems with axisymmetric for different boundary conditions. These analytical solutions can be used to verify numerical simulators.
\item Using analytical and numerical solutions, we have studied the ramifications of the nonlinearities---pressure-dependent viscosity and inertial effects---on the optimal designs. 
\item We have addressed how the boundary conditions---pressure-driven versus velocity-driven---affect the formulation of the design problem. When should we pose the problem as minimization or maximization? Should we place the volume constraint on the material with higher permeability or lower?
\end{enumerate}

Our main findings are: 
\begin{enumerate}[(1)]
\item The analytical and numerical solutions using the dissipation rate have provided realistic material layouts, thereby establishing the appropriateness of the rate of dissipation to pose design problems under \textsf{TopOpt}.
\item In obtaining material designs of axisymmetric problems numerically, one should be wary of finger-like material layouts despite using the standard filters and SIMP regularization, especially when the primal analysis does not utilize the underlying symmetry. On the other hand, as shown in this paper, a primal analysis that invokes axisymmetry in its numerical scheme can provide a symmetric material design devoid of fingers. Next, for \textsf{TopOpt} involving flow through a porous annulus (either concentric cylinders or spheres), the high-permeability material is placed near the inner circle. This result will help the design of purifying water filters.
\item In the presence of a volumetric source, optimal designs under the Barus model are noticeably different from that of the Darcy model. 
\item We provide the following guiding points to devise well-posed design problems: 
\begin{enumerate}[(i)]
\item For pressure-driven problems, use the maximization of the rate of dissipation with a volume bound on the high-permeability material. 
\item For velocity-driven problems, use the minimization of the rate of dissipation with a volume bound on the high-permeability material. 
\end{enumerate}
\end{enumerate}

The proposed framework and the insights gained from the analytical and numerical solutions offer a way to achieve optimal material designs for devices with complicated  domains, new functionalities, and greater accuracy. A plausible future work can be towards using topology optimization for flow of multi-phase fluid systems and devices in which the porous solid is deformable.

\appendix
\section{MATHEMATICAL RESULTS}
\label{Sec:Appendix}

A function $f(x)$ is said to be convex if
\begin{align}
  \label{Eqn:TopOpt_convex_inequality}
  f(\lambda x_1 + (1 - \lambda) x_2)
  \leq \lambda f(x_1) + (1- \lambda) f(x_2)
  \quad \forall \lambda \in [0,1]
\end{align}
For twice-differentiable functions, a sufficient
condition for convexity is \citep{boyd2004convex}:
\begin{align}
  f^{''}(x)\geq 0 
\end{align}
where a superscript prime denotes a derivative.
Using this sufficient condition, it is easy to
check the functions $x^3$ and $1/x$ are convex
on the positive real line (i.e., $0 < x < \infty$).

\begin{lemma}
  \label{Lemma:Lemma_on_xi_stars}
  Given that $0 \leq \gamma \leq 1$ and $0 < r_i$,
  the quantities $\widehat{\xi}_{\mathrm{3D}}^{(1)}$ and
  $\widehat{\xi}_{\mathrm{3D}}^{(2)}$ defined in
  equations \eqref{Eqn:TO_porous_Sphere_gamma_in} and
  \eqref{Eqn:TO_porous_Sphere_gamma_out}, respectively,
  satisfy 
  \begin{align}
    1 + \frac{1}{r_i} -  \left( \frac{1}{\widehat{\xi}_{\mathrm{3D}}^{(1)}}
    + \frac{1}{\widehat{\xi}_{\mathrm{3D}}^{(2)}} \right) \geq 0
  \end{align}
\end{lemma}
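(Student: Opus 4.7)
The plan is to recognize both $1/\widehat{\xi}_{\mathrm{3D}}^{(1)}$ and $1/\widehat{\xi}_{\mathrm{3D}}^{(2)}$ as values of a single convex function $h$ evaluated at convex combinations of the points $1$ and $r_i^3$, and then apply Jensen's inequality. Specifically, from the definitions \eqref{Eqn:TO_porous_Sphere_gamma_in} and \eqref{Eqn:TO_porous_Sphere_gamma_out},
\begin{align}
\frac{1}{\widehat{\xi}_{\mathrm{3D}}^{(1)}} = \bigl(\gamma\cdot 1 + (1-\gamma)\,r_i^3\bigr)^{-1/3}, \qquad
\frac{1}{\widehat{\xi}_{\mathrm{3D}}^{(2)}} = \bigl((1-\gamma)\cdot 1 + \gamma\,r_i^3\bigr)^{-1/3},
\end{align}
so both numbers are $h(\cdot)$ evaluated on the segment $[1, r_i^3]$, where $h(x) := x^{-1/3}$. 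The first step is to verify the convexity of $h$ on $(0,\infty)$ using the sufficient condition recalled in the appendix: a direct computation gives $h''(x) = (4/9)\,x^{-7/3} > 0$ for $x > 0$.

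Next, I would apply the convexity inequality \eqref{Eqn:TopOpt_convex_inequality} to $h$ with the weight $\lambda = \gamma$ (and then $\lambda = 1-\gamma$) and the points $x_1 = 1$, $x_2 = r_i^3$. This yields
\begin{align}
\frac{1}{\widehat{\xi}_{\mathrm{3D}}^{(1)}} \;\leq\; \gamma\,h(1) + (1-\gamma)\,h(r_i^3) \;=\; \gamma + \frac{1-\gamma}{r_i}, \qquad
\frac{1}{\widehat{\xi}_{\mathrm{3D}}^{(2)}} \;\leq\; (1-\gamma) + \frac{\gamma}{r_i}.
\end{align}
Adding these two inequalities causes the $\gamma$ and $1-\gamma$ coefficients to combine cleanly on each side, producing $\tfrac{1}{\widehat{\xi}_{\mathrm{3D}}^{(1)}} + \tfrac{1}{\widehat{\xi}_{\mathrm{3D}}^{(2)}} \leq 1 + 1/r_i$, which is the claim.

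There is essentially no serious obstacle; the only real choice is selecting the correct convex function. A tempting alternative---using the convexity of $1/x$ itself (explicitly invoked in the appendix) on $a^3$ and $b^3$---bounds $1/a^3$ and $1/b^3$ but not the desired reciprocals $1/a$ and $1/b$, and the extra cube root forces one to pass back through a power-mean step. Going directly with $h(x) = x^{-1/3}$ absorbs the cube root into a single convex function and makes the inequality fall out in one line. A sanity check at the boundary values $\gamma = 0$ and $\gamma = 1$ (which give $\widehat{\xi}_{\mathrm{3D}}^{(1)}, \widehat{\xi}_{\mathrm{3D}}^{(2)} \in \{1, r_i\}$ and saturate the bound) confirms the inequality is tight, so no stronger estimate should be sought.
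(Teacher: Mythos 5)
Your proof is correct, and it reaches the same two intermediate bounds as the paper --- namely $1/\widehat{\xi}_{\mathrm{3D}}^{(1)} \leq \gamma + (1-\gamma)/r_i$ and $1/\widehat{\xi}_{\mathrm{3D}}^{(2)} \leq (1-\gamma) + \gamma/r_i$ --- but by a genuinely different decomposition. The paper gets there in two chained convexity steps: first it uses convexity of $x^3$ to show $\sqrt[3]{\gamma + (1-\gamma)r_i^3} \geq \gamma + (1-\gamma)r_i$, hence $1/\widehat{\xi}_{\mathrm{3D}}^{(1)} \leq 1/(\gamma + (1-\gamma)r_i)$, and then applies convexity of $1/x$ to that intermediate quantity. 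You instead apply Jensen's inequality once to the single composite convex function $h(x) = x^{-1/3}$ evaluated at the convex combination $\gamma\cdot 1 + (1-\gamma)r_i^3$, verifying $h''(x) = (4/9)x^{-7/3} > 0$ directly. Your route is shorter and avoids introducing the intermediate quantity $\gamma + (1-\gamma)r_i$ altogether, at the small cost of checking convexity of a function not already listed in the paper's appendix (the paper only records $x^3$ and $1/x$ as its stock convex functions, which is presumably why it takes the two-step path). Your observation that the bound saturates at $\gamma \in \{0,1\}$ is a nice confirmation that neither argument is wasteful. Both proofs are valid; yours is the more economical.
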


\begin{proof} Noting that $f(x) = x^3$ is a convex
  function on the positive real axis and $\gamma
  \in [0,1]$, the definition of a convex function
  \eqref{Eqn:TopOpt_convex_inequality} implies the
  following:  
  \begin{align}
    ( \gamma (1) + (1 - \gamma) r_i )^3
    \; \leq  \; \gamma (1)^3 + (1 - \gamma) r_i^3
  \end{align}
  This implies that
  \begin{align}
    \frac{1}{\widehat{\xi}_{\mathrm{3D}}^{(1)}} 
    = \frac{1}{\sqrt[3]{\gamma + (1 - \gamma)r_i^3}}
    \leq \frac{1}{\gamma + (1 - \gamma)r_i} 
  \end{align}
  Using the fact that $f(x) = 1/x$ is a convex
  function on the positive real line, we establish
  the following inequality:
  \begin{align}
    \frac{1}{\widehat{\xi}_{\mathrm{3D}}^{(1)}} 
    \leq \frac{1}{\gamma + (1 - \gamma)r_i}
    \leq \gamma + (1 - \gamma) \frac{1}{r_i} 
  \end{align}
  By executing similar steps as above,
  we obtain the following inequality:
  \begin{align}
    \frac{1}{\widehat{\xi}_{\mathrm{3D}}^{(2)}}
    = \frac{1}{\sqrt[3]{(1 - \gamma) + \gamma r_i^3}}
    \leq \frac{1}{(1 - \gamma) + \gamma r_i}
    \leq (1 - \gamma) + \gamma \frac{1}{r_i} 
  \end{align}
  Adding the above two inequalities, we
  get the desired inequality: 
  \begin{align}
    \frac{1}{\widehat{\xi}_{\mathrm{3D}}^{(1)}} +
    \frac{1}{\widehat{\xi}_{\mathrm{3D}}^{(2)}}
    \leq 1 + \frac{1}{r_i}
  \end{align}
\end{proof}

\bibliographystyle{plainnat}
\bibliography{Master_References}
\end{document}